\newcommand{\abs}[1]{{\left\lvert#1\right\rvert}}        
\newcommand{\norm}[1]{\left\lVert#1\right\rVert}         
\newcommand{\cbrackets}[1]{\left\{#1\right\}} 
\newcommand{\scp}[1]{\left\langle#1\right\rangle}        
\DeclareMathOperator*{\deter}{det}            
\DeclareMathOperator*{\argmin}{arg\,min}      
\newcommand{\vcs}[1]{{\boldsymbol#1}}        
\newcommand{\vc}[1]{{\mathbf#1}}             
\newcommand{\Id}{\text{Id}}                   
\newcommand{\vn}{\vc n}                       
\newcommand{\vvarphi}{\vcs\varphi}            
\newcommand{\vvarphih}{\vvarphi_h}            
\newcommand{\avarphi}{z}                
\newcommand{\varphik}{\avarphi^k}            
\newcommand{\varphikone}{\avarphi^{k+1}}     
\newcommand{\vP}{\vc P}                       
\newcommand{\vt}{\vc t}                       
\newcommand{\fext}{\vc f}                     
\newcommand{\ttext}{\vc t}                    
\newcommand{\Fext}{\mathcal F}                
\newcommand{\Text}{\mathcal G}                
\newcommand{\W}{\mathcal W}                
\newcommand{\J}{\mathcal{J}}                  
\newcommand{\aJ}{J}                  
\newcommand{\aJk}{\aJ^k}                  
\newcommand{\aJi}{\aJ^i}                  
\newcommand{\aJkone}{\aJ^{k+1}}                  
\newcommand{\vartetk}{\vartheta^k}
\newcommand{\varteti}{\vartheta^i}
\newcommand{\vartetkone}{\vartheta^{k+1}}
\newcommand{\avNormal}{\vc n_p}           
\newcommand{\gap}{g_{h}}                      
\newcommand{\vu}{ u}                          
\newcommand{\vuk}{\vu^k}                      
\newcommand{\tvu}{\bar\vu}                    
\newcommand{\tvuk}{ \bar\vu^k}           
\newcommand{\mk}{m^{k}}                       
\newcommand{\imk}{\widetilde{m}^k}          
\newcommand{\mkT}{\mk_{T}}                    
\newcommand{\fkT}{\fk_{\T}}                   
\newcommand{\HkT}{\Hk_{\T}}                   
\newcommand{\iHk}{\widetilde{H}^k}                   
\newcommand{\iHkT}{\widetilde{H}_T^k}                   
\newcommand{\T}{T}                            
\newcommand{\iT}{\widetilde T}                
\newcommand{\mone}{m_{1}}                       
\newcommand{\mtwo}{m_{2}}                       
\newcommand{\ag}{c}                           
\newcommand{\nmMat}{D}                        
\newcommand{\nmMatN}{\nmMat_N}                 
\newcommand{\nmMatT}{\nmMat_T}                 
\newcommand{\inmMat}{\widetilde\nmMat}        
\newcommand{\mMat}{M}                         
\newcommand{\vnmu}{\vu^1_C}                    
\newcommand{\tvnmu}{\bar{\vu}^1_C}       
\newcommand{\tvnmuO}{\bar{\vu}^1_{C,0}}  
\newcommand{\vmu}{\vu^2_C}                     
\newcommand{\avarphinm}{\avarphi^1_C}         
\newcommand{\avarphim}{\avarphi^2_C}          
\newcommand{\fk}{f^k}                        
\newcommand{\Hk}{H^k}                        
\newcommand{\Tk}{\T^k}                       
\newcommand{\Deltak}{\Delta^k}
\newcommand{\vuknu}{\vu^{\nu}}                 
\newcommand{\vuknuhalf}{\vu^{\nu+\frac 12}}    
\newcommand{\vw}{w}                           
\newcommand{\Qnu}{Q^{\nu}}    
\newcommand{\corr}{v}                         
\newcommand{\tcorr}{\bar{\corr}}                         
\newcommand{\R}{\mathbb R}                   
\newcommand{\Rdn}{\R^{dn}}                   
\newcommand{\Na}{\mathbb N}                  
\newcommand{\GammaNm}{\Gamma_C^1}             
\newcommand{\GammaM}{\Gamma_C^2}              
\newcommand{\gammaNm}{\gamma_C^1}             
\newcommand{\gammaH}{\gamma_{h}}              
\newcommand{\gammaM}{\gamma_C^2}              
\newcommand{\Matplus}{\text{Mat}^+}           
\newcommand{\K}{\mathcal K}                   
\newcommand{\Th}{\mathcal{T}_h}               
\newcommand{\N}{\mathcal N}                   
\newcommand{\F}{\mathcal F}                   
\newcommand{\ConeH}{M_h^+}                    
\newcommand{\Hhalf}{H^{\frac 12}}                    
\newcommand{\HoneDomega}{\vc H^1_D(\Omega)}          
\newcommand{\A}{\mathcal{A}}                         
\newcommand{\kscd}{\kappa_{\textnormal{\text{scd}}}} 
\newcommand{\kteta}{\kappa_{\vartheta}}              
\newcommand{\kT}{\kappa_{T}}                         
\newtheorem{assumption}{Assumption}
\crefname{assumption}{Assumption}{Assumptions}
\theoremstyle{plain}
\newtheorem{theorem}{Theorem}[section]
\newtheorem{remark}[theorem]{Remark}
\newtheorem{lemma}[theorem]{Lemma}
\crefname{lemma}{Lemma}{Lemmata}
\theoremstyle{definition}
\newtheorem{definition}[theorem]{Definition}
\begin{document}

\author[J. Youett]{Jonathan Youett}
\address[Jonathan Youett]{Department of Mathematics and Computer Science\\
  Freie Universit\"at Berlin\\
Arnimallee 6, 14195 Berlin}
\thanks{This work has been done within the DFG \textsc{Matheon} project CH1 funded by \textsc{ECMath}}
\email{youett@math.fu-berlin.de}

\author[O. Sander]{Oliver Sander}
\address[Oliver Sander]{Institute of Numerical Mathematics\\
  Technische Universit\"at Dresden\\
  Zellescher Weg 12--14, 01069 Dresden}
\email{oliver.sander@tu-dresden.de}

\author[R. Kornhuber]{Ralf Kornhuber}
\address[Ralf Kornhuber]{Department of Mathematics and Computer Science\\
  Freie Universit\"at Berlin\\
Arnimallee 6, 14195 Berlin}
\email{kornhuber@math.fu-berlin.de}

\title[Filter--Trust-Region Method for contact problems]{A globally convergent filter--trust-region method for large deformation contact problems}

\begin{abstract}
We present a globally convergent method for the solution of frictionless large deformation contact problems for hyperelastic materials.
The discretisation uses the mortar method which is known to be more stable than node-to-segment approaches.
The resulting non-convex constrained minimisation problems are solved using a filter--trust-region scheme,
and we prove global convergence towards first-order optimal points.
The constrained Newton problems are solved robustly and efficiently using a Truncated Non-smooth Newton Multigrid (TNNMG) method with a Monotone Multigrid (MMG) linear correction step.
For this we introduce a cheap basis transformation that decouples the contact constraints.
Numerical experiments confirm the stability and efficiency of our approach.

\end{abstract}

\maketitle

\section{Introduction}

Although large deformation contact problems arise in many important applications,
only very few methods today can solve them fast and robustly.
All of these methods have their advantages and disadvantages.

Discretisation of such problems leads to constrained non-convex minimisation problems.
The prevailing methods for these problems are primal--dual active set strategies~\cite{Hartmann_Ramm:2008,Hesch_Betsch:2009,Popp:2011} and penalty methods~\cite{Puso_Laursen:2004}.
For both methods only local convergence can be expected~\cite{Wohlmuth:2005}.
Furthermore, the resulting linearised Newton problems can be indefinite due to the non-convexity of the strain energy.

In this work we construct a filter--trust-region method~\cite{Fletcher_Leyffer:2002} for the
constrained non-convex minimisation problem.
The filter technique ensures asymptotic fulfilment of the non-linear non-penetration constraints by rejecting iterates that are neither improving the energy nor the infeasibility compared to all previous iterates.
The trust-region method provides a natural way to handle indefiniteness of the linearised problems.
We show that the modifications we need to make to the method to apply it to contact problems stay within the realm of the general
filter--trust-region convergence theory, and hence we obtain global convergence of the method
to first-order stationary points.

A priori, the Newton problems of a filter--trust-region method are quadratic minimisation problems with convex inequality constraints.
Such problems are generally expensive to solve.
We extend an efficient multigrid strategy originally introduced for contact problems in small strain elasticity~\cite{Wohlmuth_Krause:2003}
to the case of large strains.
This requires rewriting the inequality constraints as sets of bound constraints.
The inequality constraints consist of two parts: the trust-region constraint and the linearised contact condition.
We define the trust-region in terms of the max-norm.
With this choice, the trust-region constraints form a set of bound constraints by construction.
To decouple the contact constraints we extend the technique used in \cite{Wohlmuth_Krause:2003} to the finite strain case.
The idea there is to construct a basis transformation that replaces the nodal basis at
the contact boundaries by a system of relative movements.  The construction of this transformation
requires the solution of a linear system involving a contact surface mass matrix
(the non-mortar matrix) during each Newton-type iteration.
The additional computational cost of this is negligible because the size of the mass matrix
is much smaller than the overall problem size, and grows with a lower order.
The transformation also leads to a slight modification of the Newton matrix, but we show that this modification does not influence the convergence behaviour of the overall method.

In previous work, the Truncated Non-smooth Newton Multigrid (TNNMG) method has been shown to be
very fast and effective for quadratic minimisation problems with bound constraints such as
small-strain contact problems and obstacle problems~\cite{Graeser_Kornhuber:2009,Graeser_Sack_Sander:2009}.
Since we have found a way to uncouple the contact constraints for finite-strain contact problems,
we can also harness the performance of TNNMG for the Newton problems of a finite-strain contact problem.
Unfortunately, this only works if the quadratic models are convex.
For the non-convex case, we extend the TNNMG method by combining it with a Monotone Multigrid
(MMG) method for the linear correction step.
The MMG method will handle the trust-region constraints (which have a comparatively simple structure)
while the more complicated contact constraints will be left to the TNNMG step.
The resulting scheme is globally convergent even for indefinite trust-region problems.
At the same time, we observe multigrid-type convergence rates in numerical experiments.

This paper is organised as follows: In \Cref{sec:continuous_contact} the static large deformation contact problem
is described and its weak formulation is derived. In \Cref{sec:discretisation} we summarise the mortar discretisation
of the problem, which we use because it avoids most instabilities and unphysical oscillations of the node-to-segment approaches~\cite{Puso_Laursen:2004}.
As a stepping stone, we then construct a locally convergent, efficient solver based on sequential quadratic programming in \Cref{sec:solution}. We introduce the TNNMG multigrid algorithm and the constraints
decoupling strategy needed to solve the quadratic constrained Newton problems.
To globalise the local SQP solver, in \Cref{sec:filter} we then describe the filter--trust-region algorithm and the combined TNNMG/MMG scheme for the solution of the linearised problems.
We show global convergence of both methods.
The final \Cref{sec:numerics} is dedicated to a numerical example.

\section{Static large deformation contact problems}
\label{sec:continuous_contact}
In this section we will briefly summarise the equations of equilibrium of two non-linear hyperelastic bodies subject to mutual contact. A more detailed introduction can be found, e.g., in~\cite{Laursen:2003}.

\subsection{Strong formulation}
\label{ssec:strong_formulation}
Let $\Omega^i\subset\R^d$, $i=1,2,\,d=2,3$ denote the disjoint reference configurations of two deformable objects.
Assume that the boundaries of the $\Omega_i$ are such that the outer unit normal fields $\vn^i_R:\partial\Omega^i\to\R^d$ exist everywhere.
Let the boundaries be decomposed into disjoint relatively open sets  $\partial\Omega^i = \overline\Gamma^i_D\cup\overline\Gamma^i_N\cup\overline\Gamma_C^i$ corresponding to Dirichlet, Neumann, and contact boundaries.
We assume that $\Gamma_D^i$ has positive $(d-1)$-dimensional measure for $i=1,2$, and that $\Gamma_D^1$ is compactly embedded in $\partial\Omega^1\setminus\overline{\GammaNm}$.

In the following, unindexed variables are used to denote quantities defined over both objects.
For example $\Omega = \Omega^1\cup\Omega^2$ denotes the reference configuration of both bodies together.
Neglecting the inertia terms, the balance of linear momentum yields the following system of partial differential equations in reference coordinates for the deformation function $\vvarphi \colonequals (\vvarphi^1,\,\vvarphi^2):\Omega\to\R^d$
\begin{alignat}{2}
  \label{eq:pde_elasticity}
  \nonumber\operatorname{div}\vP(\vvarphi) + \fext &= 0 &&\text{in }\Omega,\\
  \vP(\vvarphi) \vn_R &= \ttext \quad&&\text{on }\Gamma_N, \\
  \nonumber\vvarphi &= \vvarphi_D \quad&&\text{on }\Gamma_D.
\end{alignat}
Here, $\vP:\Omega\to \Matplus(d)$ is the first Piola--Kirchhoff stress field, and $\Matplus(d)$ is the set of $d\times d$ matrices with positive determinant.
The functions $\fext\in \vc L^2(\Omega)$
and $\ttext \in \vc L^2(\Gamma_N)$ are prescribed external volume and traction force densities, which are assumed to be independent of the deformation.
The function $\vvarphi_D\in C(\Gamma_D)^d$ specifies the Dirichlet boundary conditions.
We will only consider hyperelastic continua, i.e., materials for which there exists a stored energy functional $\W:\Omega\times\Matplus(d)\to\R$, $(x,F) \mapsto \W(x,F)$, that links the stresses to the deformation via
\begin{equation}
  \label{eq:constitutive_law}
  \frac{\partial \W}{\partial F}(\cdot,\nabla\vvarphi) = \vP(\vvarphi).
\end{equation}
We assume that the hyperelastic energy is penalising any violation of the orientation-preserving condition
\begin{equation}
  \label{eq:orientation_preserving}
 \deter\nabla\vvarphi(x)>0 \quad\forall\, x\in\Omega,
\end{equation}
in the sense that
\begin{equation*}
  \W(x,\nabla\vvarphi) \to \infty\quad\text{ if } \deter\nabla\vvarphi(x)\searrow 0.
\end{equation*}
As a consequence, we will not explicitly enforce \eqref{eq:orientation_preserving} as a hard constraint.

The subsets $\Gamma_C^i$ denote the parts of the boundaries where contact may occur.
Contact constraints are naturally formulated on the deformed domain.
For $i=1,2,$ let $\vn^i$ denote the outer unit normal field on the deformed contact boundary $\gamma_C^i \colonequals \vvarphi^i(\Gamma_C^i)$.
Modelling of non-penetration can be done in several ways, depending on which projection is chosen to identify the contact surfaces with each other.
Earlier papers used the closest-point projection from $\gamma_C^1$ to $\gamma_C^2$~\cite{Laursen:2003,Laursen_Simo:1993,Wriggers:2006}.
Recently, using the projection along $\vn^1$ has become more popular~\cite{Hesch_Betsch:2009,Puso_Laursen:2004,Popp:2011,Wriggers:2009}.
In the following we only consider the closest-point projection approach, but others can be used equally well.
The deformed contact boundaries are identified with each other through the projection $\Phi:\gammaNm\to\gammaM$
\begin{equation*}
  \label{eq:normal_projection}
  \Phi(s) \colonequals \argmin_{r\in\gammaM}\norm{s-r}.
\end{equation*}
The resulting distance function or \textit{signed gap function} $g : \gammaNm\to\R$ is given by
\begin{equation}
  \label{eq:gap_function}
  g(s) \colonequals \vn^{2}(\Phi(s))\cdot(s - \Phi(s)),
\end{equation}
where we have used the fact that $s-\Phi(s)$ is orthogonal to $\gammaM$ at $\Phi(s)$.
With these definitions, non-penetration of the bodies is enforced by requiring
\begin{equation}
  \label{eq:strong_constraint}
  g(s)\geq 0 \quad \forall\, s\in\gammaNm,
\end{equation}
cf.\ \Cref{fig:nonpenetration}.
\begin{figure}
\centering
  \def\svgwidth{\textwidth}
  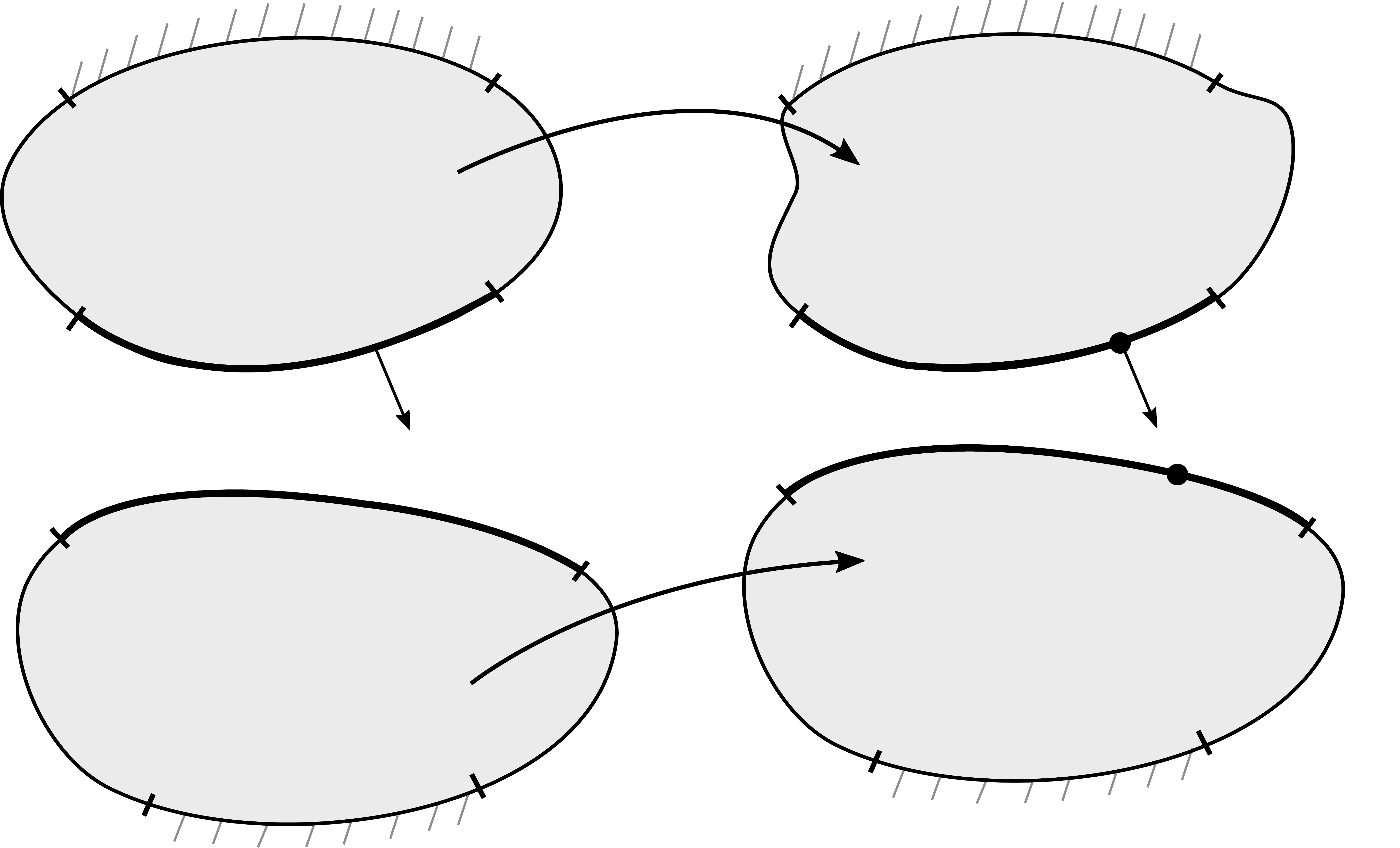
   \caption{Reference and deformed configuration of the two bodies}
\label{fig:nonpenetration}
\end{figure}

So far the non-penetration constraint was derived only from a kinematical point of view.
To investigate the effect of these constraints on the elastic system we examine the resulting contact forces.
Consider the \textit{Cauchy stress tensor} $\vcs\sigma(\vvarphi)\colonequals \deter(\nabla\vvarphi)^{-1}\vP(\vvarphi)\nabla\vvarphi^T$, which expresses the stress relative to the deformed configuration $\vvarphi(\Omega)$.
The \textit{Cauchy boundary traction}
\begin{equation*}
  \vt_C\colonequals\vcs\sigma(\vvarphi^1) \vn^1
  \label{eq:Piola_traction}
\end{equation*}
then represents the contact forces on $\gamma_C^1$.
It can be decomposed into normal and tangential parts $t_N$ and $\vt_T$ with respect to $\vn^1$.
We consider frictionless contact only so the tangential traction $\vt_T$ at the contact boundary vanishes.
The contact normal stresses fulfil the \emph{Karush--Kuhn--Tucker (KKT) conditions}
\begin{equation*}
  \label{eq:kkt_contact}
  t_N \leq 0,\quad g\geq 0,\quad g\cdot t_N = 0\quad \text{on } \gammaNm,
\end{equation*}
where the first one states that traction is a pressure, the second one is \eqref{eq:strong_constraint}, and the last one is the \emph{complementary condition}~\cite{Laursen:2003}.
%
%
\subsection{Weak formulation}
\label{ssec:weak_formulation}
The equilibrium configurations of hyperelastic continua are characterised as stationary points of the energy functional
\begin{equation*}
  \J(\vvarphi)\colonequals\int_{\Omega}\W(x,\nabla\vvarphi) - \Fext(\vvarphi)\,dx - \int_{\Gamma_N} \Text(\vvarphi)\,ds,
\end{equation*}
where $\W$ is the hyperelastic energy density \eqref{eq:constitutive_law}, and $\Fext$ and $\Text$ are potentials of the external forces.
Stable configurations are the local minimisers of this energy~\cite[Theorem 4.1-2]{Ciarlet:1988}.
%
%
Existence of minimisers has been shown for the case of a poly-convex and coercive strain energies~\cite[Theorem 7.7-1]{Ciarlet:1988}.
The corresponding first-order optimality condition is the weak form of the elasticity problem~\eqref{eq:pde_elasticity}

We now add the contact constraints.
In a Sobolev space setting, the non-penetration constraint \eqref{eq:strong_constraint} takes the form
\begin{equation}
  g(s)\geq 0\quad \text{for almost all }\,s\in\gammaNm,
  \label{eq:sobolev_constraint}
\end{equation}
and similarly for the other two KKT conditions.
In anticipation of the mortar discretisation we rewrite this condition in a variationally consistent form.
Let $\HoneDomega$ denote the Sobolev space of $d$-valued weakly differentiable functions fulfilling the Dirichlet boundary conditions in the sense of traces.
We assume that the gap function $g$ is smooth enough such that
\begin{equation*}
  \vvarphi\mapsto g(\vvarphi)
\end{equation*}
maps every $\HoneDomega$ function to a function in $W\colonequals \Hhalf(\gammaNm)$.
We denote the dual trace space by
\begin{equation*}
  M \colonequals  \Hhalf(\gammaNm)',
\end{equation*}
and the cones of positive functions and dual functionals by
\begin{align*}
  W^+&\colonequals\cbrackets{v\in W : v\geq 0\,\text{ a.e.}},\\
  M^+ &\colonequals\cbrackets{\mu\in M : \scp{\mu,v}\geq 0,\,\forall\,v\in W^+},
\end{align*}
where $\scp{\cdot,\cdot}$ denotes the dual paring of $M$ and $W$.
Now, the resulting weak formulation of the non-penetration constraint \eqref{eq:sobolev_constraint} is given by
\begin{equation*}
  \scp{\mu,g(\vvarphi)} \geq 0\quad\forall\,\mu\in M^+.
\end{equation*}
The equivalence of this to \eqref{eq:sobolev_constraint} is shown in~\cite{Wohlmuth:2011}.
%
We denote by
\begin{equation*}
  \K \colonequals \cbrackets{\vvarphi\in\HoneDomega :\,\scp{\mu,g(\vvarphi)} \geq 0\quad\forall\,\mu\in M^+}
\end{equation*}
the closed non-convex set of feasible deformations.
The weak formulation of the large deformation contact problem now reads:
\begin{equation}
  \label{eq:weak_contact}
  \text{Find a local minimiser } \vvarphi \text{ of }\J \text{ in } \K.
\end{equation}
To our knowledge the question of existence of solutions is still open.
\section{Discretisation}
\label{sec:discretisation}
In this section we will describe the discretisation of the minimisation problem \eqref{eq:weak_contact} using first-order Lagrangian finite elements, and mortar elements for the contact constraints.
Let $\Th$ be a shape-regular grid of the bodies $\Omega \colonequals \Omega^1\cup\Omega^2$, and $\N(\Th)$ the set of vertices.
The space of $d$-valued first-order finite elements is $\vc S_h = (S_h)^d$, and for each node $p\in\N(\Th)$ the scalar nodal basis function corresponding to $p$ is denoted by $\psi_p\in S_h$.
We discretise the hyperelasticity problem \eqref{eq:weak_contact} by replacing the solution space $\HoneDomega$ by the finite dimensional subspace $\vc S_{D,h}\colonequals\vc S_h\cap\HoneDomega$.

\subsection{Dual mortar discretisation of the contact constraints}
\label{ssec:mortar_discretisation}
We use dual mortar functions~\cite{Wohlmuth:2001} to discretise the mortar cone $M^+$,
but Lagrange functions can be used equally well. For a given discrete deformation
$\vvarphi_h \in \vc S_{D,h}$, let $\gamma_h^i$ be the grid of the deformed contact boundary
obtained by restricting $\Th$ to the reference contact boundary $\Gamma_C^i$, and then
deforming this restriction using $\vvarphi_h$.
We denote the basis of the Lagrange multiplier space by
\begin{equation}
  \Theta_h^\varphi \colonequals  \big\{\theta_p^\varphi: p\in \N(\gammaH^1) \big\}.
  \label{eq:discrete_cone_span}
\end{equation}
The discrete mortar cone $\ConeH \not\subset M^+$ is then given by
\begin{equation*}
  \ConeH \colonequals \Bigl\{\mu_h\in\operatorname{span}\Theta_h^\varphi :\, \int_{\gammaH^1}\mu_h(s)\,v_h(s)\,ds\geq 0 \quad \forall\,v_h\in S_h(\gammaH^1),\,v_h\geq 0\Bigr\}.
  \label{eq:discretised_cone}
\end{equation*}
This leads to the weak non-penetration constraint
\begin{equation*}
  \int_{\gammaH^1}g(s)\,\mu_h(s)\,ds \geq 0\quad \forall\,\mu_h\in \ConeH,
\end{equation*}
which, considering the definition \eqref{eq:gap_function} of the gap function $g$, is
\begin{equation}
  \label{eq:weak_inserted_constraint}
  \int_{\gammaH^1}\vn^2(\Phi(s))\cdot(s - \Phi(s))\,\mu_h(s)\,ds \geq 0\quad \forall\,\mu_h\in \ConeH.
\end{equation}
As the normal field of a piecewise polynomial surface, $\vn^2$ is not continuous on $\gammaM$.
We therefore replace it by a smoothed normal field $\vn_h$.
Define vertex normals by averaging the adjacent face normals, i.e., for each vertex $p\in\N(\gammaM)$ with neighbouring faces $\mathcal{E}(p)$ on the contact boundary we set
\begin{equation*}
  \avNormal
  \colonequals
  \frac{\sum_{e\in \mathcal{E}(p)} \vn_e}{\big\lVert \sum_{e\in \mathcal{E}(p)} \vn_e\big\rVert},
\end{equation*}
where $\vn_e$ is the face normal of $e$ at the corner $p$.
The discretised normal field $\vn_h$ is then defined as the finite element function
\begin{equation}
  \vn_h \colonequals \sum_{p\in\N(\gammaH^2)} \psi_p\avNormal,
  \label{eq:discretised_normal_field}
\end{equation}
and we replace $\vn^2$ in \eqref{eq:weak_inserted_constraint} with $\vn_h$.
This continuous approximation yields a smoother behaviour when sliding occurs compared to using discontinuous element normals, cf.~\cite{Puso_Laursen:2004}.
The resulting discrete non-penetration constraint with
\begin{equation}
  \label{eq:strong_disc_gap}
  \gap(s) \colonequals\vn_h(\Phi(s))\cdot(s - \Phi(s))
\end{equation}
reads
\begin{equation}
  \label{eq:discrete_gap}
  \int_{\gammaH^1} \gap(s)\,\mu_h(s)\,ds\geq 0,\quad \mu_h\in\ConeH.
\end{equation}
We denote the corresponding discrete feasible set by
\begin{equation*}
  \K_h \colonequals \Bigl\{\vvarphih\in\vc S_{D,h} :\, \int_{\gammaH^1}\gap(s)\,\mu_h(s)\,ds\geq 0,\quad\forall\mu_h\in\ConeH\Bigr\}.
  \label{eq:discrete_inequality_set}
\end{equation*}
Summarising, the discrete problem is given by:
\begin{equation}
  \text{Find a local minimiser } \vvarphih \text{ of }\J \text{ in } \K_h.
  \label{eq:discrete_problem}
\end{equation}
As for the non-discrete case~\eqref{eq:weak_contact}, the existence of solutions of \eqref{eq:discrete_problem} appears to be an open question.
\subsection{Algebraic contact problem}
\label{ssec:algebraic_problem}
For the rest of this paper we will denote the $p$-th component of a (block-)vector $v$ by $v_{p}$, the $p$-th row of a matrix $A$ by $A_p$, and the $(p,q)$-th entry of a (block-)matrix $A$ by $A_{pq}$.
The algebraic representation of the finite-strain contact problem is derived using the canonical isomorphism $I:\Rdn\to\vc S_h$ (where $n\colonequals\abs{\N(\Th)}$) that identifies finite element functions with their coefficient (block-)vectors.
The algebraic energy is then given by
\begin{equation*}
  \aJ : \R^{dn} \to \R,
  \qquad
  \aJ(\avarphi)\colonequals \int_{\Omega} \W(I(\avarphi))\,dx -b^T\avarphi,
\end{equation*}
with $b\in\Rdn$ given component-wise by
\begin{equation*}
  (b_{p})_i \colonequals \int_\Omega \fext\,e_i\psi_p\,dx  + \int_{\Gamma_N}\ttext\,e_i\psi_p\,ds \quad 1\leq p \leq n,\,0\leq i < d,
\end{equation*}
where $e_i$ denotes the $i$-th Euclidean basis vector.
The non-penetration constraint \eqref{eq:discrete_gap} is represented algebraically by a function $\ag:\Rdn\to\R^{\mone}$, with $\mone\colonequals\abs{\N(\gammaH^1)}$, defined by testing the weak constraint \eqref{eq:discrete_gap} with the mortar basis functions \eqref{eq:discrete_cone_span}
\begin{equation}
  \ag_{q}(\avarphi)\colonequals \int_{\gammaH^1}\gap(s)\theta_q(s)\,ds\quad 1\leq q\leq \mone.
  \label{eq:algebraic_gap}
\end{equation}
Details on how to assemble the algebraic constraints can be found in~\cite{Popp:2011}.
Summarising, the non-convex algebraic contact problem reads:
\begin{equation}
  \text{Find a local minimiser $\avarphi$ of $\aJ$ in $K$},
 \label{eq:algebraic_problem}
\end{equation}
where
\begin{equation*}
  K \colonequals \cbrackets{\avarphi\in\Rdn :\, c_q(\avarphi)\geq 0 \quad 1\leq q\leq \mone}.
\end{equation*}
\section{Inexact SQP multigrid methods for contact problems}
\label{sec:solution}
In this section we show how \eqref{eq:algebraic_problem} can be solved locally using \textit{sequential quadratic programming} (SQP).
We propose a basis transformation that decouples the linearised constraints for each quadratic sub-problem.
The transformed problems can then be solved robustly and efficiently using a Truncated Non-smooth Newton Multigrid (TNNMG) method.
The transformation involves minor modifications to the tangent stiffness matrices that do not harm the overall convergence properties.

\subsection{Sequential Quadratic Programming}
\label{ssec:sqp}
Consider the constrained optimisation problem \eqref{eq:algebraic_problem}.
The first-order optimality conditions are given by the following theorem.
\begin{theorem}[\cite{Nocedal:2006}, Theorem\,12.1]
    \label{thm:kkt}
    Let $\avarphi^*$ be a local minimiser of \eqref{eq:algebraic_problem}.
    If the rows of the active constraint Jacobian, i.e., those rows $p$ of $\nabla\ag(\avarphi^*)$ for which  $\ag_{p}(\avarphi^*)=0$, are linearly independent,
    then there exists a Lagrange multiplier $\lambda\in\R^{\mone}$ such that
    \begin{equation}
      \label{eq:discrete_kkt}
      \begin{aligned}
        \nabla\aJ(\avarphi^*) + \lambda^T\nabla \ag(\avarphi^*) &= 0,\\
        \ag(\avarphi^*)&\geq 0,
      \end{aligned}
    \end{equation}
  and
    \begin{equation*}
      \lambda_p \leq 0,\quad \lambda_{p} \ag_{p}(\avarphi^*) = 0, \quad 1\leq p\leq \mone.
    \end{equation*}
\end{theorem}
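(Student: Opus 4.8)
The plan is to follow the classical route to the Karush--Kuhn--Tucker conditions: reduce the assertion to the active constraints, pass from the geometric first-order optimality condition on the tangent cone to the linearised cone by exploiting the constraint qualification, and conclude with a theorem of the alternative. I would begin by introducing the active set $\A\colonequals\cbrackets{p:\ag_p(\avarphi^*)=0}$ and disposing of the inactive constraints: for every $p\notin\A$ put $\lambda_p\colonequals 0$, so that complementarity $\lambda_p\ag_p(\avarphi^*)=0$ holds for these $p$ trivially and for $p\in\A$ because $\ag_p(\avarphi^*)=0$; feasibility $\ag(\avarphi^*)\geq 0$ is automatic, a local minimiser of \eqref{eq:algebraic_problem} lying in $K$ by definition. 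It then remains only to produce coefficients $\lambda_p\leq 0$ for $p\in\A$ with $\nabla\aJ(\avarphi^*)+\sum_{p\in\A}\lambda_p\,\nabla\ag_p(\avarphi^*)=0$.

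The core step establishes the geometric first-order condition on the linearised cone $\mathcal{C}\colonequals\cbrackets{d\in\Rdn:\nabla\ag_p(\avarphi^*)\cdot d\geq 0\ \text{for all}\ p\in\A}$. Fix $d\in\mathcal{C}$. Since the active gradients $\cbrackets{\nabla\ag_p(\avarphi^*):p\in\A}$ are linearly independent, the implicit function theorem supplies a $C^1$ arc $t\mapsto\avarphi(t)$ on an interval $[0,\varepsilon)$ with $\avarphi(0)=\avarphi^*$ and $\avarphi'(0)=d$ along which $\ag_p(\avarphi(t))=t\,\rbrackets{\nabla\ag_p(\avarphi^*)\cdot d}\geq 0$ for $p\in\A$, while $\ag_p(\avarphi(t))>0$ for the remaining $p$ once $\varepsilon$ is small enough (by continuity, these constraints being strictly inactive at $\avarphi^*$); in particular $\avarphi(t)\in K$ for all $t\in[0,\varepsilon)$. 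Because $\avarphi^*$ is a local minimiser of $\aJ$ over $K$, the function $t\mapsto\aJ(\avarphi(t))$ attains its minimum over $[0,\varepsilon)$ at $t=0$, so its right derivative there, namely $\nabla\aJ(\avarphi^*)\cdot d$, is nonnegative. Hence $\nabla\aJ(\avarphi^*)\cdot d\geq 0$ for every $d\in\mathcal{C}$.

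It then suffices to invoke Farkas' lemma: the implication ``$\nabla\ag_p(\avarphi^*)\cdot d\geq 0$ for all $p\in\A$ $\Rightarrow$ $\nabla\aJ(\avarphi^*)\cdot d\geq 0$'' is precisely its hypothesis, and it yields coefficients $\mu_p\geq 0$, $p\in\A$, with $\nabla\aJ(\avarphi^*)=\sum_{p\in\A}\mu_p\,\nabla\ag_p(\avarphi^*)$. Setting $\lambda_p\colonequals-\mu_p\leq 0$ for $p\in\A$ gives $\nabla\aJ(\avarphi^*)+\lambda^T\nabla\ag(\avarphi^*)=0$ together with the required sign and complementarity conditions, completing the proof.

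The step I expect to be the crux is the construction of the feasible arc with prescribed initial velocity $d$ --- equivalently, the inclusion of the linearised cone $\mathcal{C}$ in the tangent cone of $K$ at $\avarphi^*$ --- since this is the only place the constraint qualification is genuinely needed; the preliminary reduction to the active set and the final Farkas argument are routine. A variant that avoids the tangent-cone analysis altogether is to perturb $\aJ$ by a quadratic penalty on the constraint violation plus a term $\tfrac{\alpha}{2}\norm{\avarphi-\avarphi^*}^2$, apply the unconstrained first-order condition to the minimisers of the perturbed problems over a fixed closed ball around $\avarphi^*$, and pass to the limit; the constraint qualification then re-enters to keep the multiplier sequences bounded, so that a convergent subsequence delivers the Lagrange multiplier.
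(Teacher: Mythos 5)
The paper does not prove this statement at all: it is quoted verbatim as Theorem~12.1 of the cited reference \cite{Nocedal:2006}, so there is no in-paper argument to compare against. Your proof is correct and is essentially the standard proof given in that reference --- reduction to the active set, the implicit-function-theorem construction of a feasible arc showing that under LICQ the linearised cone lies in the tangent cone, the geometric first-order condition $\nabla\aJ(\avarphi^*)\cdot d\geq 0$, and Farkas' lemma, with the sign flip $\lambda_p=-\mu_p\leq 0$ matching the paper's convention $\nabla\aJ(\avarphi^*)+\lambda^T\nabla\ag(\avarphi^*)=0$ for constraints of the form $\ag\geq 0$.
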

The SQP method is derived by applying Newton's method to the first-order optimality system \eqref{eq:discrete_kkt} and eliminating the Lagrange multiplier.
In the following let upper indices $k\in\Na$ be the iteration number of the Newton method, and introduce a quadratic model energy by
\begin{equation}
  \label{eq:model_energy}
  \mk(\vu)\colonequals \nabla\aJ(\varphik)^T\vu + \frac 12 \vu^T \Hk\vu,
\end{equation}
with $\Hk\in\R^{dn\times dn}$ symmetric.
The SQP constraints are derived by replacing the constraint $c(z) \ge 0$ with its linearisation
at $z^k$
\begin{equation}
  \label{eq:lin_constraints}
    \nabla\ag(\varphik)\vu + \ag(\varphik)\geq 0,
\end{equation}
where $u \in \R^{dn}$ is the argument of $m^k$.
Then, the Newton problems can be reformulated as quadratic minimisation problems for the correction $\vuk\in\Rdn$
\begin{equation}
    \tag{QP}
    \min_{\vu\in\Rdn} \mk(\vu),\quad \nabla\ag(\varphik)\vu + \ag(\varphik)\geq 0.
  \label{eq:sqp_subproblems}
\end{equation}
Local linear convergence of this scheme can be proven if $\Hk$ is a symmetric positive definite approximation of the Hessian of the Lagrangian
\begin{equation}
  \Hk \approx \nabla^2\aJ(\varphik) + (\lambda^k)^T\nabla^2 \ag(\varphik),
  \label{eq:hessian_lagrangian}
\end{equation}
see \cite[Theorem\,18.7]{Nocedal:2006}.
\subsection{Multigrid methods for bound-constrained quadratic minimisation problems}
\label{ssec:mg_obstacle_problems}
In an SQP method, solving the constraint quadratic problems \eqref{eq:sqp_subproblems} is by far the most costly part.
We will solve these problems with multigrid efficiency using the Truncated Non-smooth Newton Multigrid (TNNMG) method~\cite{Graeser_Sack_Sander:2009}.
To illustrate the method we assume for the rest of this section that the local models $\mk$ are strictly convex.

Consider the quadratic functional \eqref{eq:model_energy} with $\Hk\in\R^{dn\times dn}$ symmetric and positive definite.
For simplicity we drop the superscript $k$ for this section.
Also, for this section only, we assume that the linear constraints decouple into bound constraints.
Hence, we want to find the unique minimiser of $m$ subject to
\begin{equation}
  \label{eq:bound_constraints}
  a_i\leq \vu_i\leq b_i\quad 1\leq i\leq dn,
\end{equation}
where the $a_i$ may be $-\infty$ and the $b_i$ may be $+\infty$.
One iteration step of TNNMG for this problem can be separated into the following four sub-steps:
Let $\vu^\nu\in\Rdn$ be a given iterate.

\medskip

\noindent\textit{1.\ Projected Gauss-Seidel step}\\
Set $\vw_0 = \vu^\nu$; then for $p = 1,\,\ldots,\,dn$, set
\begin{equation}
  \begin{aligned}
    &\alpha_p = \argmin_{a_{p}\leq \alpha + \vu^\nu_{p}\leq b_{p}}\, m(\vw_{p-1} + \alpha e_p),\\
  &\vw_p = \vw_{p-1} + \alpha_p e_p,
\end{aligned}
  \label{eq:gs_problem_convex}
\end{equation}
where $e_p$ is the $p$-th Euclidean basis vector.\\
Denote by $\vu^{\nu+\frac 12} \colonequals \vw_{dn}$ the resulting pre-smoothed iterate.

\medskip

\noindent\textit{2.\ Truncated linear correction}\\
To accelerate the convergence, the smoothing is followed by a linear correction step for the defect problem
\begin{equation*}
  \min_{\corr\in\Rdn}\frac 12 \corr^TH\corr - r^T\corr,
\end{equation*}
where the residual is given by
\begin{equation*}
  r \colonequals \nabla\aJ(\avarphi)  - H\vu^{\nu + \frac 12}.
\end{equation*}
For this step the active components
\begin{equation*}
  \A(\vu) \colonequals \Bigl\{p\in\cbrackets{1,\ldots,dn} : \vu_{p} = a_{p} \text{ or } \vu_{p} = b_{p}\Bigr\}
\end{equation*}
are truncated~\cite{Graeser_Kornhuber:2009}, i.e., temporarily frozen.
This is achieved by multiplying the defect problem with the truncation matrix
\begin{equation}
  \label{eq:truncation_matrix}
  \Qnu\in\R^{dn\times dn}:\quad\Qnu_{pq} \colonequals \begin{cases} 1 & p=q \text{ and }p\notin\A(\vu^{\nu+\frac 12}),\\ 0 & \text{else.}\end{cases}
\end{equation}
The linear truncated defect problem therefore reads
\begin{equation}
  \label{eq:convex_defect_problem}
  v^\nu \colonequals \argmin_{\corr\in\Rdn} \frac 12 \corr^T Q^\nu H Q^\nu \corr - (r^\nu Q^\nu)^T \corr.
\end{equation}
Note that the defect problem \eqref{eq:convex_defect_problem} is unconstrained.
For the approximate solution of this problem on the space spanned by the inactive
components one (or a few) geometric or algebraic linear multigrid step(s) is used.

\medskip

\noindent\textit{3.\ Projection}\\
The resulting correction $\corr^\nu$ may violate the defect constraints.
To ensure feasibility it is projected back onto the defect obstacles in the $l^2$-sense, i.e., we define $\hat\corr^\nu$ by
\begin{equation*}
  \hat\corr^\nu_i \colonequals \begin{cases} b_i -\vu^{\nu+\frac12}_i & \text{if } \corr^\nu_i > b_i -\vu^{\nu+\frac12}_i,\\
   a_i -\vu^{\nu+\frac12}_i & \text{if } \corr^\nu_i < a_i -\vu^{\nu+\frac12}_i,\\
   \corr_i^\nu & \text{else}.
 \end{cases}
\end{equation*}

\medskip

\noindent\textit{4.\ Line search}\\
The projection in Step~3 can lead to an increase of model energy.
To ensure monotonicity of the algorithm a line search is performed
\begin{equation}
  \alpha^\nu=\argmin_{\alpha\in\R}\,\mk(\vuknuhalf+\alpha\hat\corr^\nu),\quad\text{s.t. } \vuknuhalf + \alpha\hat\corr^\nu \text{ admissible.}
\label{eq:line_search_problem}
\end{equation}
This one-dimensional constrained quadratic problem can be solved analytically.
As a result we obtain $\vu^{\nu+1}\colonequals \vu^{\nu+\frac 12} + \alpha^\nu\hat\corr^\nu$ with
\begin{equation*}
  m(\vu^{\nu+1})\leq m(\vu^{\nu +\frac 12})\leq m(\vu^\nu).
\end{equation*}
Global convergence of the algorithm follows immediately from the convergence of the pre-smoothing Gauss--Seidel step and the monotonicity.
\begin{theorem}
  [\cite{Graeser_Kornhuber:2009}, Theorem.\,6.4]
  Suppose that $H\in\R^{dn\times dn}$ is symmetric positive definite, and the constraints
  have the form~\eqref{eq:bound_constraints}. Then
  the TNNMG method converges globally to a minimiser of \eqref{eq:model_energy} subject to \eqref{eq:bound_constraints}.
\end{theorem}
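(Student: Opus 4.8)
The plan is to treat one TNNMG iteration as a monotone, feasibility-preserving correction of a single sweep of the projected Gauss--Seidel method, and to inherit convergence from the classical theory for the latter. First I would verify that the admissible set $K \colonequals \{u\in\Rdn : a_i\le u_i\le b_i\}$ is forward invariant. The coordinate minimisations \eqref{eq:gs_problem_convex} keep each updated component inside its bounds, so $u^{\nu+\frac12}\in K$; the projection in Step~3 is exactly the $l^2$-projection of $u^{\nu+\frac12}+\corr^\nu$ onto the box, so $u^{\nu+\frac12}+\hat\corr^\nu\in K$; and since the line search \eqref{eq:line_search_problem} constrains $u^{\nu+1}$ to be admissible while $\alpha=0$ is always a feasible choice, $u^{\nu+1}\in K$ and $m(u^{\nu+1})\le m(u^{\nu+\frac12})$. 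Along the way one records that the truncated matrix $\Qnu H\Qnu$ in \eqref{eq:convex_defect_problem} is positive definite on $\operatorname{range}\Qnu$ because $H$ is symmetric positive definite, so the linear correction problem is well posed on the inactive components, which is all the algorithm uses.

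Next I would establish the full energy chain $m(u^{\nu+1})\le m(u^{\nu+\frac12})\le m(u^\nu)$. The second inequality is immediate from the pre-smoother, since each step of \eqref{eq:gs_problem_convex} is an exact one-dimensional constrained minimisation and hence non-increasing; the first was just observed. Because $H$ is symmetric positive definite, $m$ is strictly convex and coercive, hence bounded below on the closed set $K$; the monotone sequence $m(u^\nu)$ therefore converges, its consecutive differences tend to zero, and by the squeeze $m(u^\nu)-m(u^{\nu+\frac12})\to 0$ as well.

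The core of the argument is to turn this vanishing pre-smoothing decrement into convergence of the iterates. A one-sweep estimate for projected Gauss--Seidel shows that the $p$-th coordinate step decreases $m$ by at least $\tfrac12 H_{pp}\,\alpha_p^2$ with $H_{pp}>0$, so $m(u^\nu)-m(u^{\nu+\frac12})\to 0$ forces $\|u^{\nu+\frac12}-u^\nu\|\to 0$, i.e. the pre-smoother becomes asymptotically stationary. Boundedness of $(u^\nu)$ is automatic if the box is bounded, and otherwise follows from coercivity of $m$ together with $m(u^\nu)\le m(u^0)$; so one may extract a limit point $u^*$. Passing to the limit in the defining inequalities of one Gauss--Seidel sweep shows that $u^*$ is a fixed point of the sweep, which is equivalent to the variational inequality $\nabla m(u^*)^\top(v-u^*)\ge 0$ for all $v\in K$, i.e. the first-order optimality condition for \eqref{eq:model_energy} under \eqref{eq:bound_constraints}. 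By strict convexity this condition has a unique solution, the minimiser; hence every limit point of $(u^\nu)$ equals it and the whole sequence converges to it.

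The step I expect to be the main obstacle is the last one: rigorously ruling out that the \emph{extra} correction steps (truncated linear correction, projection, line search) stall the iteration, e.g.\ by repeatedly undoing the progress of the smoother. The resolution is precisely the monotonicity established above --- the extra steps only lower $m$ further, so the telescoping-energy argument still pins the Gauss--Seidel decrement at zero whatever the correction does --- but making this airtight requires the non-expansivity of the box projection and the well-posedness of \eqref{eq:convex_defect_problem} noted earlier, so that $u^{\nu+1}$ is always well defined and admissible. With these ingredients the proof reduces to the monotone multigrid convergence argument of Gräser and Kornhuber \cite{Graeser_Kornhuber:2009}.
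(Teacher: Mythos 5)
Your argument is correct and follows exactly the route the paper indicates (and that \cite{Graeser_Kornhuber:2009} carries out): monotonicity of all four sub-steps reduces everything to the pre-smoother, the quadratic lower bound $\tfrac12 H_{pp}\alpha_p^2$ on the per-coordinate decrease forces the Gauss--Seidel increments to vanish, and continuity of the sweep together with the equivalence of coordinate-wise and global optimality on a box identifies every limit point with the unique minimiser. The paper itself only states this as a one-sentence remark before citing the reference, so your write-up is a faithful expansion of the same proof rather than a different one.
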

\subsection{Decoupling the constraints}
\label{ssec:decoupling_exact_constraints}
In this section we will construct a basis transformation of $\Rdn$ that decouples the linearised contact constraints~\eqref{eq:lin_constraints}.
This generalises an idea from~\cite{Wohlmuth_Krause:2003}, which did the same
in the infinitesimal strain framework.
We start by considering $\nabla\ag(\avarphi)$ in more detail:
The linearisation
\begin{equation*}
  \delta \ag_p(\avarphi)\vu \colonequals  \lim_{t\to 0} \ag_p(\avarphi + t\vu) \quad \vu\in\Rdn
\end{equation*}
of the $p$-th component of the algebraic contact constraint \eqref{eq:algebraic_gap} in the direction of $\vu\in\Rdn$ can be divided into three parts
\begin{equation}
  \begin{aligned}
      \delta \ag_p(\avarphi) = &\int_{\gammaH^1}\delta\vn_h(\Phi(s))\cdot(s - \Phi(s))\,\theta_p(s)\,ds\\
      + &\int_{\gammaH^1}\vn_h(\Phi(s))\cdot\delta\Bigl[(s - \Phi(s))\,\theta_p(s)\Bigr]\,ds\\
  + &\int_{\gammaH^1}\vn_h(\Phi(s))\cdot(s - \Phi(s))\,\theta_p(s)\,\delta ds.
\end{aligned}
  \label{eq:linearised_constraints}
\end{equation}
The first part is the linearisation of the nodally averaged normal field \eqref{eq:discretised_normal_field}. In the continuous case this term vanishes due to the colinearity of the normal $\vn^2(\Phi(s))$ with the closest point projection $s -\Phi(s)$, see \cite{Laursen:2003}.
The second part is the linearisation of the discretised gap function \eqref{eq:strong_disc_gap} and the mortar basis function,
and the third summand is the linearisation of the deformation dependent integral domain, which we denote by $\delta ds$.

Let $\mtwo\colonequals\abs{\N(\GammaM)}$ be the number of vertices on the contact boundary $\GammaM$, and as before $\mone\colonequals\abs{\N(\GammaNm)}$.
In the following we assume for simplicity that the coefficient vectors $\vu\in\Rdn$ are ordered such that $\vu = (\vnmu,\,\vmu,\,\vu^I)$, where $\vnmu\in\R^{d\mone}$ and $\vmu\in\R^{d\mtwo}$
are the degrees of freedom on the contact boundaries $\GammaNm$ and $\GammaM$ respectively, and $\vu^I$ denotes all other degrees of freedom.
Then, the algebraic form \eqref{eq:linearised_constraints} of the constraint Jacobian $\nabla\ag(\avarphi)$ can be split into a \emph{non-mortar} and a \emph{mortar} part, corresponding to the linearisations with respect to $\avarphinm$ and $\avarphim$, respectively
\begin{equation*}
  \nabla\ag(\avarphi) = \begin{pmatrix} \nmMat(\avarphi) & \mMat(\avarphi) & 0 \end{pmatrix},
\end{equation*}
where $\nmMat(\avarphi) \colonequals \frac{\partial \ag(\avarphi)}{\partial\avarphinm} \in \R^{\mone\times d\mone},\quad \mMat(\avarphi)\colonequals\frac{\partial\ag(\avarphi)}{\partial\avarphim}\in\R^{\mone\times d\mtwo}$ are sparse block-matrices given by
\begin{equation*}
  \begin{aligned}
    \nmMat(\avarphi)_{pq} = &\int_{\gammaH^1}\vn_h(\Phi(s))\frac{\partial}{\partial\avarphi^1_q}\Bigl[(s - \Phi(s))\,\theta_p(s)\Bigr]\,ds\\
    + &\int_{\gammaH^1}\vn_h(\Phi(s))\cdot(s - \Phi(s))\,\theta_p(s)\,\frac{\partial}{\partial\avarphi^1_q} ds\in \R^{1\times d},\\
    \mMat(\avarphi)_{pq} =
      &\int_{\gammaH^1}\frac{\partial}{\partial\avarphi^2_q}
      \bigl[\vn_h(\Phi(s))\bigr](s - \Phi(s))\,\theta_p(s) - \vn_h(\Phi(s))\frac{\partial}{\partial\avarphi^2_q}
      \Bigl[\Phi(s)\Bigr]\theta_p(s)\,ds\\
  + &\int_{\gammaH^1}\vn_h(\Phi(s))\cdot(s - \Phi(s))\,\theta_p(s)\,\frac{\partial}{\partial\avarphi^2_q} ds\in\R^{1\times d},
  \end{aligned}
\end{equation*}
and $0$ denotes a $\mone\times d(n-\mone-\mtwo)$ zero matrix.
The algebraic linearised constraints \eqref{eq:lin_constraints} then take the form
\begin{equation}
  \label{eq:lin_algebraic_constraints}
 \nmMat(\avarphi)\vnmu + \mMat(\avarphi)\vmu \geq -\ag(\avarphi).
\end{equation}

In our aim to decouple these constraints we first separate the normal from the tangential components.
Let $O(\avarphi)\in\R^{d\mone\times d\mone}$ be the block-diagonal matrix consisting of Householder transformations $O_{11},\ldots,O_{\mone\mone}$ such that $O_{pp}(\avarphi)\in\R^{d\times d}$ rotates the first Euclidean basis vector $e_1\in\R^d$ onto the normal $\vn_h$ at the projected vertex $\Phi(p)\in\gamma_h^2$, for all $p\in\N(\gammaH^1)$.
We use $O(z)$ to transform the non-mortar matrix by
\begin{equation}
  \bigl(\nmMat(\avarphi) O(\avarphi)\bigr)_{pq} \equalscolon  \bigl(\overbrace{\nmMatN(\avarphi)_{pq}}^{\in\R} \quad \overbrace{\nmMatT(\avarphi)_{pq}}^{\in\R^{d-1}} \bigr).
  \label{eq:normal_nonmortar}
\end{equation}
In the normal part $\nmMatN(\avarphi)\in\R^{\mone\times \mone}$, the first component of each $(1\times d)$-block of $D(\avarphi)O(\avarphi)$ is collected.
Analogously, the $d-1$ tangential components are collected in $\nmMatT(\avarphi)\in\R^{\mone\times (d-1)\mone}$.
The crucial insight of \cite{Wohlmuth_Krause:2003} was to see that the contact constraints can be decoupled by inverting $\nmMatN$.
For small-strain contact problems this could be trivially achieved, because the
biorthogonality of the dual mortar basis lead to a diagonal matrix $\nmMatN$.
In the finite-strain setting, $\nmMatN$ is sparse but no longer diagonal.
We suppose that the matrix remains invertible, for all relevant configurations $z$.
For the sake of the argument we use its inverse $\nmMatN^{-1}$ now and comment later on how to compute it efficiently.

Consider the following deformation-dependent transformation $T(\avarphi)\in\R^{dn\times dn}$
\begin{equation}
    \T(\avarphi) \colonequals  \begin{pmatrix}  O(\avarphi)K(\avarphi)  & -O(\avarphi)L(\avarphi) & 0\\ 0 & \Id  & 0 \\ 0& 0 & \Id
  \end{pmatrix},
  \label{eq:exact_mortar_transformation}
\end{equation}
where the $(d\times d)$-block-matrices $K(\avarphi),L(\avarphi)$ are given component-wise by
\begin{equation*}
    K_{pq}(\avarphi) \colonequals
    \begin{pmatrix}
      -(\nmMatN^{-1})_{pq} &
      -(\nmMatN^{-1}\nmMatT)_{pq} \\[0.5em]
      0      & \delta_p^q\Id^{(d-1)\times (d-1)}
   \end{pmatrix},
   \quad L_{pq}(\avarphi) \colonequals
   \begin{pmatrix} (\nmMatN^{-1}\mMat)_{pq} \\[0.5em] 0^{(d-1)\times d} \end{pmatrix}.
\end{equation*}
The inverse of $\T$ is sparse and has the form
\begin{equation*}
  \T(\avarphi)^{-1} = \begin{pmatrix}  U(\avarphi)O(\avarphi)  & -V(\avarphi) & 0\\ 0 & \Id  & 0 \\ 0& 0 & \Id
  \end{pmatrix},
\end{equation*}
where
\begin{equation*}
    U_{pq} \colonequals
    \begin{pmatrix}
      -(\nmMatN)_{pq} &
      -(\nmMatT)_{pq} \\[0.5em]
      0      & \delta_p^q\,\Id^{(d-1)\times (d-1)}
    \end{pmatrix}
    \quad
    \text{and}
    \quad
    V_{pq} \colonequals
    \begin{pmatrix}
    \mMat_{pq} \\[0.5em]
    0^{(d-1)\times d} \end{pmatrix}.
\end{equation*}
\begin{lemma}
    \label{lem:transformed_basis}
    In the transformed coordinates
\begin{equation*}
  \tvu = \T^{-1}(\avarphi)\vu,
\end{equation*}
the linearised contact constraints \eqref{eq:lin_algebraic_constraints} take the form
\begin{equation}
  \tvnmuO \leq \ag(\avarphi),
  \label{eq:transformed_constraints}
\end{equation}
where
$\tvnmuO$ is the vector that contains the first of each block of $d$ degrees of freedom on $\gammaH^1$.
\end{lemma}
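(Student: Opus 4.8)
The plan is a single, somewhat bookkeeping-heavy block-matrix computation; there is no conceptual difficulty. Write the transformed correction in the same block ordering as $\vu$, that is, $\tvu = (\tvnmu,\,\bar{\vu}^2_C,\,\bar{\vu}^I)$. Because the lower two block-rows of $\T(\avarphi)$ in \eqref{eq:exact_mortar_transformation} are trivial, inverting the relation $\tvu = \T(\avarphi)^{-1}\vu$ to $\vu = \T(\avarphi)\tvu$ yields at once $\bar{\vu}^2_C = \vmu$, $\bar{\vu}^I = \vu^I$, and
\[
  \vnmu = O(\avarphi)\,K(\avarphi)\,\tvnmu - O(\avarphi)\,L(\avarphi)\,\vmu .
\]
I would substitute this into the algebraic linearised constraint \eqref{eq:lin_algebraic_constraints}, which becomes
\[
  \nmMat(\avarphi)\,O(\avarphi)\,K(\avarphi)\,\tvnmu - \nmMat(\avarphi)\,O(\avarphi)\,L(\avarphi)\,\vmu + \mMat(\avarphi)\,\vmu \geq -\ag(\avarphi) .
\]
The whole lemma then reduces to two matrix identities: $\nmMat O L = \mMat$, so that the two $\vmu$-terms cancel; and that $\nmMat O K$ acts on $\tvnmu$ by extracting the negated first component of each block of $d$ entries, i.e.\ $(\nmMat O K\,\tvnmu)_p = -(\tvnmuO)_p$.

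Both identities I would verify block by block, the only ingredients being the defining relation \eqref{eq:normal_nonmortar} and the assumed $\nmMatN \nmMatN^{-1} = \Id$. By \eqref{eq:normal_nonmortar}, the $(p,q)$-block of $\nmMat(\avarphi)O(\avarphi)$ is the $1\times d$ row whose first entry is the scalar $(\nmMatN)_{pq}$ and whose remaining $d-1$ entries form $(\nmMatT)_{pq}$. Multiplying from the right by the sparse block structure of $L(\avarphi)$ annihilates the tangential entries and leaves $\sum_q (\nmMatN)_{pq}\,(\nmMatN^{-1}\mMat)_{qr} = (\nmMatN\nmMatN^{-1}\mMat)_{pr} = \mMat_{pr}$, hence $\nmMat O L = \mMat$. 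Multiplying instead by the block structure of $K(\avarphi)$ leaves, in the first slot of the resulting $1\times d$ block, $-\sum_q (\nmMatN)_{pq}\,(\nmMatN^{-1})_{qr} = -\delta_p^r$, and in the remaining $d-1$ slots $-\sum_q (\nmMatN)_{pq}\,(\nmMatN^{-1}\nmMatT)_{qr} + (\nmMatT)_{pr} = -(\nmMatT)_{pr} + (\nmMatT)_{pr} = 0$. Consequently $(\nmMat O K\,\tvnmu)_p$ equals minus the first entry of the $p$-th $d$-block of $\tvnmu$, i.e.\ $-(\tvnmuO)_p$, and the constraint collapses to $-\tvnmuO \geq -\ag(\avarphi)$, which is exactly \eqref{eq:transformed_constraints}. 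The same cancellations incidentally confirm that the matrix displayed for $\T(\avarphi)^{-1}$ just above the lemma is genuinely the inverse of \eqref{eq:exact_mortar_transformation}, which is a convenient cross-check.

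The only real obstacle is notational: one must keep straight which sub-blocks are scalars, $(d-1)$-vectors, $1\times d$ rows, or $d\times d$ matrices, and keep the normal/tangential ordering consistent with the convention that each Householder block $O_{pp}(\avarphi)$ sends $e_1$ to the averaged normal at $\Phi(p)$. Once the dimensions are aligned, every step is forced. I would stress that, unlike the small-strain construction of \cite{Wohlmuth_Krause:2003}, no biorthogonality of the dual mortar basis is used here; the argument relies solely on the invertibility of $\nmMatN$, which was assumed for all relevant configurations $\avarphi$.
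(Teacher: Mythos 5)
Your proposal is correct and follows essentially the same route as the paper: substituting $\vu = \T(\avarphi)\tvu$ into \eqref{eq:lin_algebraic_constraints} is the same computation as right-multiplying the constraint Jacobian by $\T$, and the two block identities you verify, $(\nmMat O K)_{pq} = \begin{pmatrix}-\delta_p^q & 0\end{pmatrix}$ and $\nmMat O L = \mMat$, are exactly the ones established in the paper's proof. The remark that only invertibility of $\nmMatN$ (and not biorthogonality) is used is a correct and worthwhile observation, though not needed for the argument itself.
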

\begin{proof}
  We omit the dependencies on $\avarphi$ for simplicity.
  The linearised constraints \eqref{eq:lin_algebraic_constraints} transform according to
  \begin{align*}
     (\nabla\ag)\,\T &=
    \begin{pmatrix} \nmMat & \mMat & 0\end{pmatrix}\, \T \\
    &=\bigl(DOK \quad \bigl[\mMat-DOL\bigr] \quad 0 \bigr).
  \end{align*}
  The first column of this is an $(\mone\times d\mone)$-matrix.
  It can be simplified by noting that for any $p,q=1,\ldots,\mone$
  \begin{align*}
    (DOK)_{pq} &= \sum_{j=1}^m \begin{pmatrix}(\nmMatN)_{pj} & (\nmMatT)_{pj} \end{pmatrix}
    \begin{pmatrix}
      -(\nmMatN^{-1})_{jq} & -(\nmMatN^{-1}\nmMatT)_{jq} \\[0.5em]
         0   & \delta_j^q\Id\\
   \end{pmatrix}\\
   &=\begin{pmatrix} -\delta_p^q &  \bigl[-(\nmMatT)_{pq} + (\nmMatT)_{pq}\bigr] \end{pmatrix} = \begin{pmatrix} -\delta_p^q & 0^{1\times (d-1)} \end{pmatrix}.
  \end{align*}
The second column vanishes since for $p\in\cbrackets{1,\ldots,\mone}$,\,$q\in\cbrackets{1,\ldots,\mtwo}$ we have
  \begin{equation*}
    (DOL)_{pq} = \sum_{j=1}^{\mone} \begin{pmatrix}(\nmMatN)_{pj} & (\nmMatT)_{pj} \end{pmatrix}
    \begin{pmatrix} (\nmMatN^{-1}\mMat)_{jq} \\ 0 \end{pmatrix} = \mMat_{pq},
  \end{equation*}
  where we have used the relationship \eqref{eq:normal_nonmortar}. Therefore, if $\vu$ is a vector such that \eqref{eq:lin_constraints} holds, we obtain that \eqref{eq:transformed_constraints}, and vice versa.
\end{proof}
In transformed coordinates, sub-problem \eqref{eq:sqp_subproblems} turns into
\begin{equation}
    \tag{TQP}
    \min_{\tvu\in\Rdn}\, \mkT(\tvu),\quad (\tvnmuO)_p \leq \ag_p(\varphik), \quad p=1,\ldots,\mone,
  \label{eq:sqp_transformed_subproblem}
\end{equation}
with transformed quadratic energy
\begin{equation*}
  \begin{aligned}
    &\mkT(\tvu)\colonequals \fkT\tvu + \frac 12 \tvu^T \HkT\tvu,\\
 &\fkT\colonequals \nabla\aJ(\varphik)^T\T(\varphik),\quad \HkT\colonequals \T(\varphik)^T\Hk \T(\varphik).
 \end{aligned}
\end{equation*}
If the transformed model energy is strictly convex, this quadratic minimisation problem with bound constraints can be solved by the TNNMG method of the previous section.
To avoid having to assemble \eqref{eq:exact_mortar_transformation} on all grid levels,
only the pre-smoothing Gauss--Seidel step is applied to the decoupled formulation \eqref{eq:sqp_transformed_subproblem}.
The truncated defect problem and coarse grid correction are computed in Euclidean coordinates.
\subsection{Avoiding the inverse non-mortar matrix}
\label{ssec:inexact_constraints}

The decoupling strategy of the previous section uses the explicit inverse of the sparse matrix $\nmMatN(\avarphi)$, whose size corresponds to the number of degrees of freedom on the non-mortar contact boundary $\gammaH^1$.
While the inverse itself can be computed in reasonable time using a direct sparse solver, it leads to
a considerable increase of density of the tangent stiffness matrix $\HkT$ compared to the untransformed matrices $\Hk$.
This severely slows down the multigrid solver.
In the following we show how the matrix inversion and the resulting density increase can be avoided while conserving the convergence of the SQP method and the filter method presented in the next section.
To this end, we first consider a lumped approximation of the non-mortar matrix $\nmMatN$
\begin{equation*}
  (\inmMat_N)_{pq} \colonequals \begin{cases} \sum\limits_{j=1}^{\mone} (\nmMatN)_{pj} & p=q,\\ 0 & \text{else.} \end{cases}
\end{equation*}
We then define a new transformation $\iT(\varphik)$ by formula \eqref{eq:exact_mortar_transformation}, but using the diagonal matrix $\inmMat_N^{-1}$ instead of $\nmMatN^{-1}$.
Then, we apply this new transformation to the tangent stiffness matrix $\Hk$ only, but we keep the exact transformation $\T(\varphik)$ for the gradient $\fkT$.
%
%
The resulting approximate SQP problem reads
\begin{equation}
    \tag{IQP}
    \min_{\tvu\in\Rdn}\, \imk(\tvu),\quad (\tvnmuO)_p \leq \ag_p(\varphik),\quad p=1,\ldots,\mone,
  \label{eq:isqp_subproblem}
\end{equation}
with
\begin{equation*}
    \imk(\tvu)\colonequals \fkT\tvu + \frac 12 \tvu^T \iHkT\tvu,\quad \iHkT \colonequals \iT(\varphik)^T\iHk\iT(\varphik).
\end{equation*}
In other words, we still compute the sub-problem in the transformed coordinates of \Cref{ssec:decoupling_exact_constraints}, but we have replaced the tangent matrix by a sparser approximation.
Note that we retain the first-order consistency of the SQP model \eqref{eq:sqp_subproblems},
because the linear term $\fkT$ is still transformed according to the exact mapping $\Tk$.
This guarantees the convergence of the SQP method, and of the filter--trust-region method presented in the next section.
Further, this transformation can be done without explicitly computing $\nmMatN^{-1}$ by solving the small linear system
\begin{equation*}
  (\nmMatN(\varphik))^T\bar f = f^1_{C,0},
\end{equation*}
where $f^1_{C,0}$ consists of the first components of the entries of $\nabla\J(\varphik)$ that correspond to degrees of freedom on the non-mortar contact boundary $\gamma_h^1$.
The transformed gradient $\fkT$ can then be directly computed from $\bar f$ by multiplication with $O(\varphik)$ and $(\nmMatT(\varphik))^T$ resp. $(\mMat(\varphik))^T$,
cf.~\eqref{eq:exact_mortar_transformation}.
Similarly, the transformation back to Euclidean coordinates
\begin{equation*}
 \vu = T(\varphik)\tvu ,
\end{equation*}
can be computed without the explicit inverse $\nmMatN^{-1}$ by solving the small linear system
\begin{equation*}
  \nmMatN(\varphik)\hat\vu^1_{C,0} = -(\tvnmu + \nmMatT(\varphik)\tvu_{C,T}^1 + \mMat(\varphik)\tvu_{C}^2),
\end{equation*}
and rotating the block vector $\R^{d\mone} \ni w\colonequals\begin{pmatrix}\hat\vu^1_{C,0} &\tvu_{C,T}^1\end{pmatrix}$
%
\begin{equation*}
  \vnmu = O w,\quad \vmu = \tvu_{C}^2,\quad \vu^I = \tvu^I,
\end{equation*}
where $\tvu_{C,T}^1\in\R^{(d-1)\mone}$ denotes the block-vector corresponding to the tangential non-mortar degrees of freedom.
\begin{remark}
Approximating the algebraic problem is often done in large deformation contact problems to simplify the unknown contact forces that show up explicitly in the weak formulation when applying an active-set method \cite{Hartmann_Ramm:2008,Popp:2011}.
  This allows to eliminate the Lagrange multipliers at the cost of losing angular momentum conservation.
  In contrast, by conserving the first-order consistency of the sub-problems \eqref{eq:sqp_subproblems}, the approximation of the Hessian suggested here is only affecting the convergence rate of the SQP method and preserves the angular momentum.
\end{remark}
\section{Globalisation by Filter--Trust-Region Methods}
\label{sec:filter}
We globalise the SQP method of the previous chapter by extending it to a filter--trust-region method.
In contrast to the active-set strategies widely used in contact mechanics~\cite{Hesch_Betsch:2009,Popp:2011, Hartmann_Ramm:2008}, this method can be shown to converge globally even for rather general non-convex strain energy functionals.
\subsection{Filter--trust-region methods}
The SQP method of the previous chapter converges only locally.
Furthermore, away from local minimisers of $\aJ$, the exact Hessian \eqref{eq:hessian_lagrangian} does not have to be positive definite.
Hence, approximating it by a positive definite matrix may result in poor performance of the SQP method~\cite{Nocedal:2006}.
In the following we will use the popular approximation of~\eqref{eq:hessian_lagrangian}
by the Hessian of the energy
\begin{equation*}
  \Hk = \nabla^2\aJ(\varphik),
\end{equation*}
which avoids the need to compute the Lagrange multipliers during the SQP iteration.
To handle the possible unboundedness from below of the local problems \eqref{eq:sqp_subproblems}
with this definition of $\Hk$, the \emph{trust-region globalisation} adds a norm constraint on the correction
\begin{equation}
  \norm{\tvu} \leq \Deltak,\quad k=0,1,\ldots
  \label{eq:tr_constraints}
\end{equation}
We choose the infinity norm as then \eqref{eq:tr_constraints} is equivalent to a set of bound constraints, which fits naturally with the non-smooth multigrid solver of \Cref{ssec:mg_obstacle_problems}.

The constraint is adjusted dynamically according to how well the local model approximates the non-linear functional.
We measure the approximation quality by the scalar quantity
\begin{equation}
\label{eq:model_approximation_quality}
  \rho^k \colonequals  \frac{\aJ(\varphik) -\aJ(\varphik+\vuk)}{\imk(0) - \imk(\tvuk)},
\end{equation}
where $\tvuk$ is the solution of the SQP sub-problem \eqref{eq:isqp_subproblem} in transformed coordinates and $\vuk=\Tk\tvuk$.

Incorporating \eqref{eq:tr_constraints} into \eqref{eq:isqp_subproblem} yields the constrained quadratic optimisation problems
\begin{equation}
    \tag{TRQP}
  \begin{aligned}
    &\min_{\tvu\in\Rdn} \imk(\tvu),\\
    -\Deltak  &\leq \tvu_p \leq \ag_{p}^{\Deltak},\quad 1\leq p\leq dn,
  \end{aligned}
  \label{eq:tr_subproblem}
\end{equation}
with
\begin{equation*}
  \label{eq:full_tr_constraints}
  \ag_{p}^{\Deltak} \colonequals\begin{cases} \min\cbrackets{\ag_p(\varphik),\Deltak} & p\text{ first component of degree of freedom on }\gammaH^1, \\ \Deltak &\text{else.} \end{cases}
\end{equation*}
These problems always have at least one solution, even if $\imk$ is non-convex.

To arrive at a globally convergent scheme one also has to control the possible infeasibility of the intermediate iterates $\varphik$, which results from replacing the non-linear contact constraint from \eqref{eq:algebraic_problem} by a linearised one.
We measure the infeasibility of an iterate using the non-smooth function
\begin{equation*}
  \vartheta(\avarphi)\colonequals\max_{p=1,\ldots,\mone}\cbrackets{0,-\ag_p(\avarphi)}.
\end{equation*}
A filter method creates tentative new iterates by solving \eqref{eq:tr_subproblem}, and accepting or rejecting them based on a set of criteria.
In the following we use the abbreviations $\aJk\colonequals \aJ(\varphik)$ and $\vartetk\colonequals \vartheta(\varphik)$ to denote the energy and infeasibility of the \mbox{$k$-th} iterate.
Let $\varphikone$ be a potential new iterate, i.e., $z^{k+1} = z^k + u^k$, with $u^k$
an approximate solution of \eqref{eq:tr_subproblem}.
If $\aJkone\leq\aJi$ and $\vartetkone\leq\varteti$ for all previous iterates $i$, then the step can be accepted.
If there is a previous iterate $\avarphi^i, i\leq k$, such that
\begin{equation*}
  \aJi\leq \aJkone\quad \text{ and }\quad \varteti\leq\vartetkone,
\end{equation*}
then the candidate $\varphikone$ should be rejected.
The critical question is what to do if
\begin{equation*}
  \aJkone < \aJi, \quad \text{ but } \quad \vartetkone>\varteti,
\end{equation*}
or vice versa, for all previous iterates.
To overcome this difficulty \citeauthor{Fletcher_Leyffer:2002} introduced the notion of a \textit{filter}~\cite{Fletcher_Leyffer:2002}.
\begin{definition}
  \label{def:filter}
  Let $0 <\xi <1$. A pair $(\aJk,\vartetk)$ \textit{$\xi$-dominates} $(\aJi,\varteti)$ if
\begin{equation*}
  \aJk < \aJi - \xi \vartetk \quad \text{ and }\quad \vartetk < (1-\xi)\varteti .
\end{equation*}
For a fixed constant $0 < \xi < 1$, a set of tuples $(\aJi, \varteti)$ is called a \textit{filter} $\F_{\xi}$, if no tuple $\xi$-dominates any other tuple in $\F_\xi$ (\Cref{fig:filter}).
\end{definition}
A filter defines a region of acceptable new iterates.
\begin{definition}
  \label{def:accetable}
An iterate $\varphikone$ is \textit{acceptable to the filter} $\F_\xi$, if
\begin{equation*}
  \label{eq:filter_acceptance}
 \aJ(\varphikone) < \aJi - \xi \vartheta(\varphikone)\quad \text{ or }\quad \vartheta(\varphikone) < (1-\xi)\varteti \quad\forall\, (\aJi, \varteti)\in\,\F_\xi.
\end{equation*}
\end{definition}
\begin{figure}[h]
  \centering
  \def\svgwidth{0.5\textwidth}
  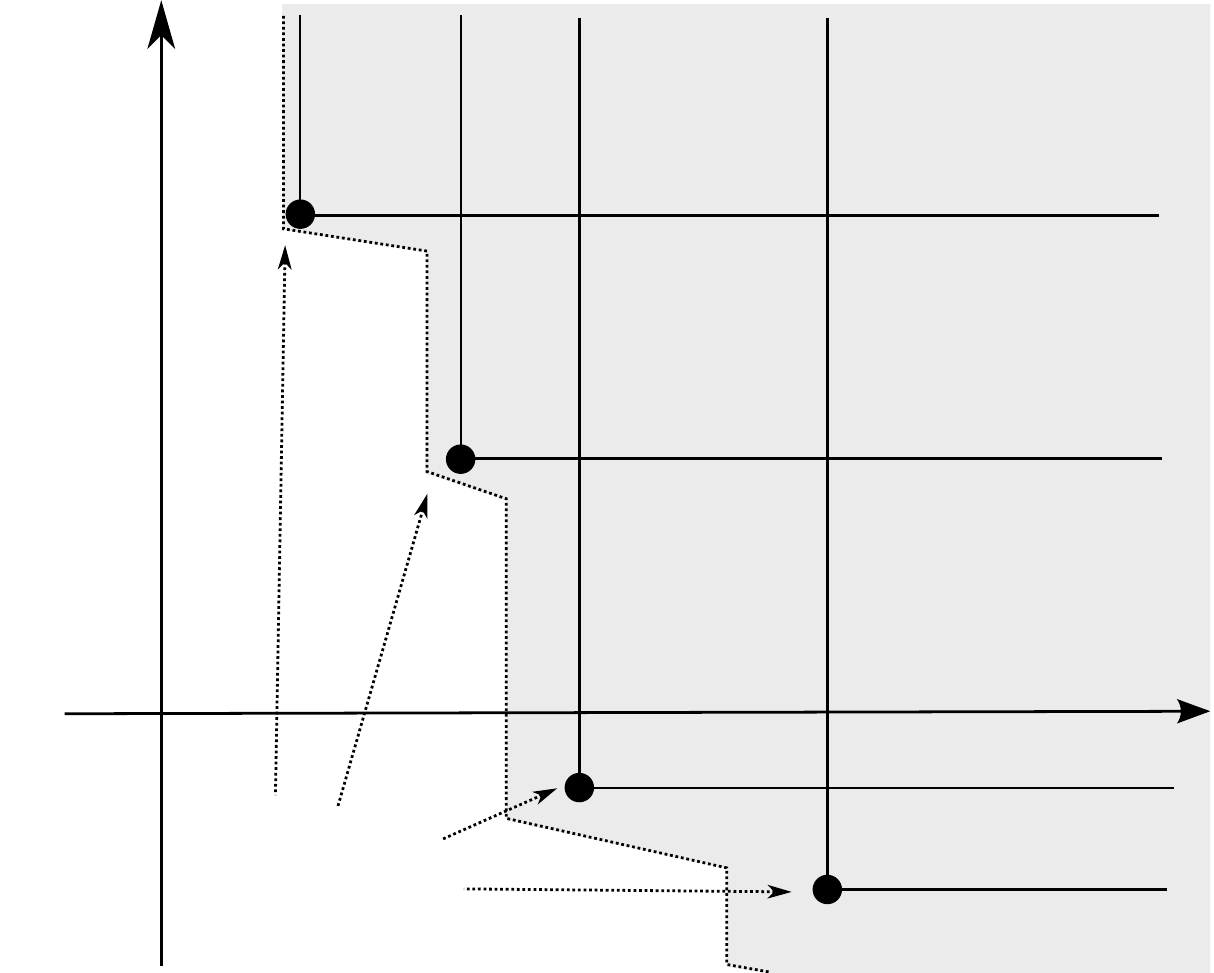
  \caption{Illustration of a filter with four points. The grey area corresponds to points that are not acceptable.}
  \label{fig:filter}
\end{figure}
Certain acceptable iterates are added to the filter during the filter iteration, and
all pairs that are dominated by the new iterate are removed.
\begin{remark}
  This criterion guarantees the convergence towards the feasible set $\K_h$ of every acceptable sequence of iterates that is subsequently added to the filter, if $\xi >0$, see~\cite[Lemma.\,15.5.2]{Conn_Gould_Toint:2000}.
\end{remark}
\begin{figure}[h]
  \centering
  \def\svgwidth{\textwidth}
  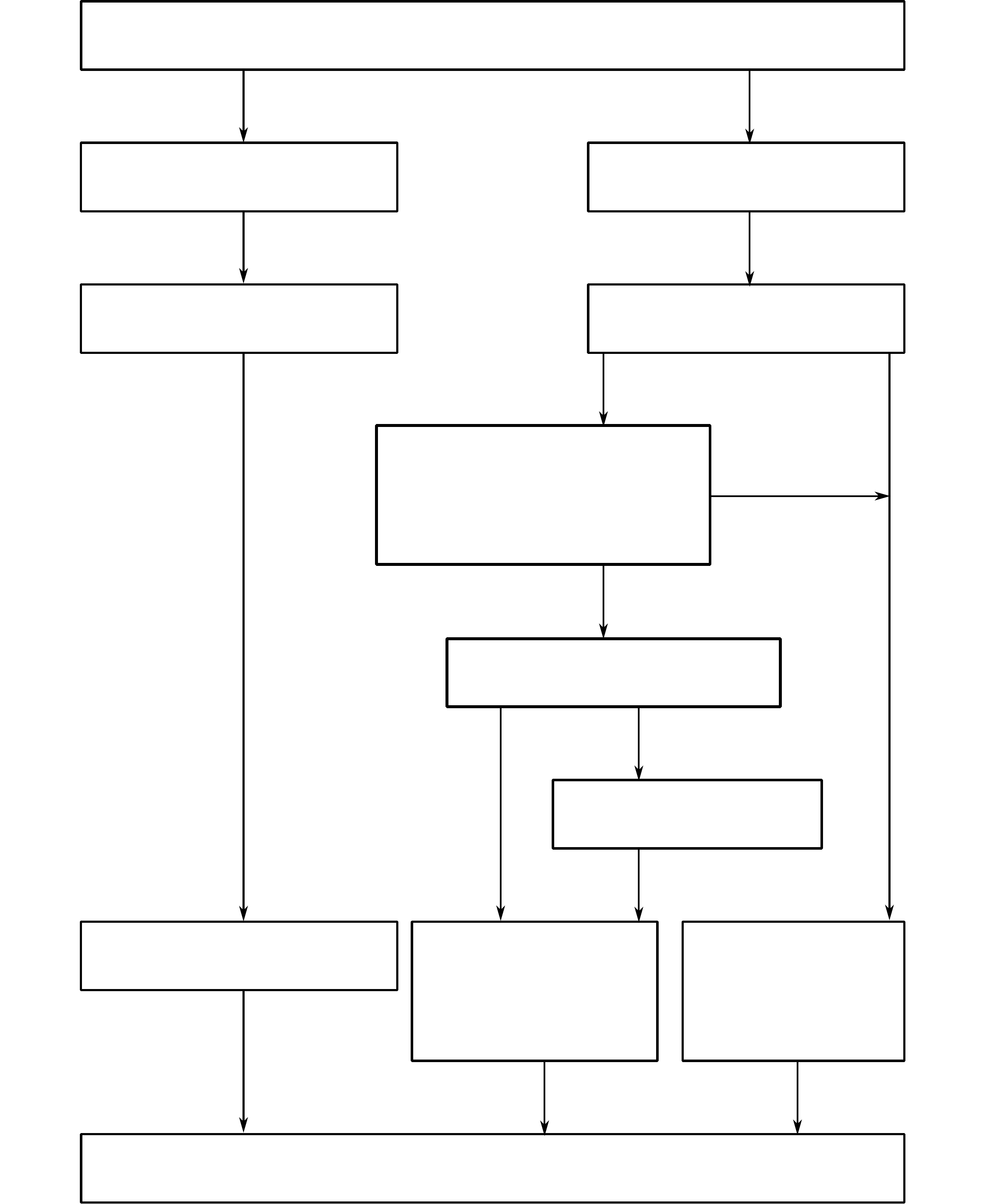
  \caption{Illustration of the filter--trust-region method}
  \label{fig:flowchart_filter}
\end{figure}
The filter--trust-region algorithm is given by the following steps:

\medskip

\noindent\textit{1.~Computing a candidate}\\
Compute a new candidate $\varphik + \vuk$ by approximately solving \eqref{eq:tr_subproblem}, and evaluate the corresponding energy $\aJ(\varphik + \vuk)$ and infeasibility $\vartheta(\varphik + \vuk)$.

\medskip

\noindent\textit{2.~Acceptance tests}\\
If the candidate is not acceptable to the filter then the trust-region is decreased.
Further, if the approximation quality of the model is poor, i.e., $\rho^k < \eta_1 < 1$,
for some fixed constant $0 < \eta_1 < 1$,
the candidate is also rejected whenever the current infeasibility is small.
This is estimated by checking if
\begin{equation}
  \label{eq:theta_step}
         \imk(0) - \imk(\tvuk) \geq \kteta(\vartetk)^2
\end{equation}
for some fixed $0 < \kteta < 1$. In the affirmative case the trust-region is also decreased $\Delta^{k+1} < \Delta^k$.

\medskip

\noindent\textit{3.~$\vartheta$-type iteration}\\
If the feasibility check \eqref{eq:theta_step} fails, the previous iterate $\varphik$ is added to the filter and the candidate is accepted by the filter method.
Hence \eqref{eq:theta_step} enables the method to accept candidates that improve the infeasibility
$\vartetkone <\vartetk$ while possibly increasing the energy.
This is called a $\vartheta$-type iteration.

\medskip

\noindent\textit{4.~$\aJ$-type iteration}\\
If \eqref{eq:theta_step} is fulfilled and the approximation quality of the model is high, i.e., $\rho^k \geq \eta_2$, with $0 <\eta_1 \leq \eta_2 < 1$, then additionally the trust-region
radius can be increased.
This potentially allows to achieve a larger energy reduction in the following iteration.

\medskip

\noindent\textit{5.~Ensuring admissibility}\\
The combination of the trust-region constraints with the linearised non-penetration constraints can lead to local problems \eqref{eq:tr_subproblem} that do not have a solution.
This happens when the infeasibility is too large while the trust-region is very small
\begin{equation*}
  \ag_p(\varphik) < -\Deltak\quad\text{ for some } p\in\cbrackets{1,\ldots, \mone}.
\end{equation*}
This case is treated by the filter method as follows:
First, the tuple $(\aJ^k,\vartetk)$ of the previous iterate is added to the filter;
it is always acceptable by construction.
Then, the algorithm enters the so-called \emph{feasibility restoration phase}.
In this phase a new iterate $\varphikone$ and trust-region radius $\Delta^{k+1}$
are computed such that $\varphikone$ is acceptable to the filter and the local problem \eqref{eq:tr_subproblem} is admissible again.
This is done by minimising the infeasibility directly
\begin{equation}
  \tag{FRP}
  \label{eq:frp_problem}
  \min_{\avarphi\in\Rdn}\vartheta(\avarphi),
\end{equation}
e.g., by using a semi-smooth trust-region method~\cite{Conn_Gould_Toint:2000}.
To ensure that a point which is acceptable to the filter can be computed, it is crucial that only infeasible points are included in the filter
%
\begin{equation*}
(\aJi, \varteti)\in\F\Rightarrow\varteti\neq 0.
\end{equation*}
This is achieved by only adding the tuple $(\aJ^k,\vartetk)$ to the filter if \eqref{eq:theta_step} fails.
A flowchart of the method can be found in \Cref{fig:flowchart_filter}.

\subsection{Global convergence of the filter--trust-region method}
\label{ssec:filter_convergence}
The general filter--trust-region theory shows global convergence of the method to first-order optimal
points under mild assumptions on the problem~\cite{Fletcher:2002b}. We state these assumptions here
for the case of the finite-strain contact problem, and then formally state the convergence result.

\begin{assumption}
  \label{ass:bounded_iterates}
  The iterates $\varphik$ generated by the filter method stay in a compact set $\mathcal{L}$.
\end{assumption}

Unfortunately, this does not immediately follow from coercivity of the hyperelastic energy functional,
as the filter--trust-region algorithm is not a monotone descent method.

\begin{assumption}
  \label{ass:inexactjac}
  The contact constraint $c : \R^{dn} \to \R^{\mone}$ and the energy are are both
  twice continuously differentiable on $\mathcal{L}$.
  \end{assumption}

The contact constraint is smooth enough if the contact boundary is, and if the occurring deformations
are not too extreme.

\begin{assumption}
  \label{ass:trans}
  The normal non-mortar matrix $\nmMatN(\avarphi)$ is regular and has a bounded inverse on $\mathcal{L}$.
\end{assumption}

This assumption again only rules out a few extreme deformations.  As $\nmMatN$ is a mass matrix,
the assumption is mainly about the grid quality of the deformed configurations.

The smoothness of $\ag$ and the boundedness of $\nmMatN^{-1}$ imply the boundedness of the exact and lumped transformations that decouple the linearised contact constraints
  \begin{equation*}
    \norm{\T(\varphik)^{-1}}\leq \kT,\quad \norm{\T(\varphik)}\leq \kT,\quad \big\lVert\iT(\varphik)\big\rVert\leq \kT,
  \end{equation*}
  with a constant $\kT>0$ independent of $k$.
This in turn implies the boundedness of the transformed Hessians $\big\lVert\iHkT\big\rVert$ and gradients $\big\lVert\fkT\big\rVert$
%
\begin{align*}
  &\big\lVert\iHkT\big\rVert\leq \max_{\avarphi\in\mathcal{L}}\norm{\nabla^2 \aJ(\avarphi)}\kT^2,\quad
  \norm{\fkT} \leq \max_{\avarphi\in\mathcal{L}}\,\norm{\nabla \aJ(\avarphi)}\kT.
\end{align*}
This last boundedness is the assumption that typically appears in general filter--trust-region results.

The final assumption is to ensure that the model energy $\imk$ is reduced sufficiently during each filter iteration.
Therefore, let $\chi:\Rdn\to\R$ be an optimality measure of the sub-problem \eqref{eq:tr_subproblem}, i.e., a non-negative, continuous function that vanishes
if and only if $\tvu$ is a stationary point of the inexact SQP sub-problem \eqref{eq:tr_subproblem}.
\begin{assumption}
\label{ass:cauchy_decrease}
  The numerical solution $\tvuk$ of \eqref{eq:tr_subproblem} fulfils the \emph{sufficient Cauchy decrease} condition:
    \begin{equation*}
      \imk(0) - \imk(\tvuk) \geq \kscd\chi(\varphik) \min\cbrackets{\frac{\chi(\varphik)}{\bigl\lVert\iHkT\bigr\rVert},\,\Deltak},
    \end{equation*}
    for some constant $\kscd >0$.
\end{assumption}
  \Cref{ass:cauchy_decrease} means that at least a fixed fraction of the decrease that is generated
  by following the projected gradient has to be achieved \cite{Conn:1993}.
  This assumption is fulfilled when suitably many iterations of the globally convergent method TNNMG method are performed to solve \eqref{eq:tr_subproblem}.
Numerical tests indicate that already one iteration is enough to exceed the desired decrease.

From these assumptions the general filter--trust-region theory~\cite{Conn_Gould_Toint:2000,Fletcher:2002b}
deduces the following global convergence result.
\begin{theorem}
  [\cite{Conn_Gould_Toint:2000}, Theorem\,15.5.13]
  \label{thm:filter_global_convergence}
    Let \Cref{ass:bounded_iterates,ass:trans,ass:inexactjac,ass:cauchy_decrease} hold and $(\varphik)_{k\in\Na}$ be a sequence generated by the filter--trust-region method.
    Then, either the feasibility restoration phase terminates unsuccessfully by converging to a critical point of \eqref{eq:frp_problem} or there exists a subsequence $(\avarphi^{k_l})_{l\in\Na} \subseteq (\varphik)_{k\in\Na}$ such that
    \begin{equation*}
      \lim_{l\to\infty} \avarphi^{k_l} = \avarphi_*,
    \end{equation*}
    where $\avarphi_*$ is a first-order critical point of the non-linear problem \eqref{eq:algebraic_problem}.
\end{theorem}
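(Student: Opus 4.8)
The plan is to show that \Cref{thm:filter_global_convergence} is a direct instance of the abstract filter--trust-region convergence theory, so that it follows verbatim from \cite[Theorem\,15.5.13]{Conn_Gould_Toint:2000} (see also \cite{Fletcher:2002b}). Concretely, I would match each hypothesis of the abstract theory with one of \Cref{ass:bounded_iterates,ass:inexactjac,ass:trans,ass:cauchy_decrease} or a stated consequence, and then check carefully that the two non-standard ingredients of our algorithm — the basis transformation that decouples the contact constraints and the lumped, inexact Hessian $\iHk$ — do not leave the scope of that theory.

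First I would collect the structural facts. \Cref{ass:bounded_iterates} supplies compactness of the iterates (which, as the paper notes, does not follow from coercivity because the method is non-monotone), and \Cref{ass:inexactjac} supplies the $C^2$ regularity of $\aJ$ and $\ag$ on $\mathcal L$. Combining \Cref{ass:trans} with this smoothness yields the uniform bounds $\norm{\T(\varphik)}, \norm{\T(\varphik)^{-1}}, \norm{\iT(\varphik)}\le\kT$ already recorded in the excerpt, hence the uniform bounds on $\norm{\iHkT}$ and $\norm{\fkT}$. The bound on $\iHkT$ is precisely the bounded-model-Hessian hypothesis of the abstract theorem. The bound on $\fkT$, together with the identity $\imk(0)=0$ and $\nabla\imk(0)=\fkT=\nabla\aJ(\varphik)^{T}\T(\varphik)$, is the crucial point of \emph{first-order consistency}: although the quadratic part of $\imk$ uses the lumped transformation of the approximate Hessian, the linear part is the exact gradient transformed by the exact $\T(\varphik)$. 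Therefore the ratio $\rho^k$ in \eqref{eq:model_approximation_quality} and the feasibility test \eqref{eq:theta_step} — which only ever see $\imk(0)=0$ and this exact transformed gradient — behave exactly as in the standard analysis, and the Hessian approximation affects only the asymptotic rate, not convergence.

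Next I would deal with the decoupled sub-problem \eqref{eq:tr_subproblem}. Here I invoke \Cref{lem:transformed_basis}: the bound constraints $(\tvnmuO)_p\le\ag_p(\varphik)$ together with the trust-region box are exactly the image, under the invertible change of coordinates $\tvu=\T(\varphik)^{-1}\vu$, of the linearised contact constraints \eqref{eq:lin_algebraic_constraints} intersected with the max-norm ball $\norm{\vu}\le\Deltak$. Since $\T(\varphik)$ and its inverse are uniformly bounded, $\tvuk$ is a stationary point of \eqref{eq:tr_subproblem} if and only if $\vuk=\Tk\tvuk$ is stationary for the untransformed linearised problem \eqref{eq:sqp_subproblems} with trust-region constraint; hence the criticality measure $\chi$ of the abstract theory can be taken continuous and vanishing exactly at such points. \Cref{ass:cauchy_decrease} is then verbatim the sufficient (Cauchy-point) decrease requirement on the inner solver, and it holds because the TNNMG iteration used for \eqref{eq:tr_subproblem} is globally convergent and dominates a projected-gradient step. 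Finally, the infeasibility function $\vartheta(\avarphi)=\max_p\{0,-\ag_p(\avarphi)\}$ is Lipschitz and piecewise $C^2$ on $\mathcal L$, the filter is kept free of feasible points, and \eqref{eq:frp_problem} is exactly the restoration subproblem of the abstract framework, so the restoration-phase dichotomy (unsuccessful termination at a critical point of $\vartheta$ versus production of a filter-acceptable point) carries over unchanged. With every hypothesis of \cite[Theorem\,15.5.13]{Conn_Gould_Toint:2000} verified, that theorem yields the claimed subsequence $(\avarphi^{k_l})$ converging to a first-order critical point of \eqref{eq:algebraic_problem}.

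I expect the main obstacle to be bookkeeping rather than new mathematics: one must check that \emph{every} place where the abstract theory uses the model gradient or the model value at the origin (the $\rho^k$-test, the $\vartheta$-test \eqref{eq:theta_step}, the Cauchy decrease, and the filter acceptance logic) only ever sees the exactly transformed gradient $\fkT$ and $\imk(0)=0$, so that the inexactness of $\iHk$ is genuinely harmless; and that the equivalence of criticality in transformed and untransformed coordinates (\Cref{lem:transformed_basis} plus the boundedness in \Cref{ass:trans}) is applied consistently to define the optimality measure $\chi$. The non-smoothness of $\vartheta$ and the feasibility restoration phase add nothing beyond noting that $\vartheta$ is regular enough for the semi-smooth trust-region method cited.
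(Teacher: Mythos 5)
Your proposal matches the paper's own argument: the theorem is invoked as an instance of the general filter--trust-region theory of \cite{Conn_Gould_Toint:2000}, and the paper's justification consists precisely of deriving the uniform bounds on $\T$, $\T^{-1}$, $\iT$ (hence on $\iHkT$ and $\fkT$) from \Cref{ass:inexactjac,ass:trans}, noting the first-order consistency retained by transforming the gradient exactly, and combining this with \Cref{ass:bounded_iterates,ass:cauchy_decrease} to satisfy the hypotheses of the cited theorem. Your additional bookkeeping on where the inexact Hessian does and does not enter ($\rho^k$, the $\vartheta$-test, the Cauchy decrease) is a correct and somewhat more explicit elaboration of the same route.
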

As all four assumptions are reasonable in the context of finite-strain contact problems, we have shown
global convergence of our filter--trust-region multigrid contact solver.

\subsection{Multigrid solution of the trust-region sub-problems}
\label{ssec:tnnmmg}
The TNNMG method  of \Cref{ssec:mg_obstacle_problems} cannot be used to solve the trust-region sub-problems \eqref{eq:tr_subproblem}.
They are still quadratic minimisation problems with bound constraints; however,
the functionals may now be non-convex. Remember that TNNMG includes an unconstrained minimisation step for the truncated energy \eqref{eq:convex_defect_problem}.
This minimisation does not have a solution if the quadratic energy is not convex.
We circumvent this problem by adding an additional set of bound constraints to \eqref{eq:convex_defect_problem},
which can be interpreted as applying a trust-region method to compute the coarse grid correction.
The resulting obstacle problem can be solved using the classical monotone multigrid method (MMG) from \cite[Algorithm 5.10]{Graeser_Kornhuber:2009}.
In contrast to the TNNMG, the monotone multigrid method does not neglect the obstacles on the coarser grids.
In principle, MMG could be used directly to solve \eqref{eq:tr_subproblem}.
However, this would require the transformations \eqref{eq:exact_mortar_transformation} on each level of the grid hierarchy, which complicates the implementation~\cite{Wohlmuth_Krause:2003,Sander:2008}.

In the following we revisit the sub-steps of a TNNMG iteration and describe the necessary modifications.
We use $\nu$ to denote the TNNMG iteration number, but for brevity we omit the SQP iteration index $k$.
Let $\vuknu\in\Rdn$ be the current iterate.

\medskip

\noindent\textit{1.~Projected Gauss-Seidel step}\\
The non-linear smoother remains unchanged, noting that the one-dimensional minimisation problems \eqref{eq:gs_problem_convex} always have at least one solution, because we minimise over a compact set now.
If the global minimiser \eqref{eq:gs_problem_convex} is not unique, we pick the one with a larger $\alpha_p$.
Let $\tvu^{\nu+\frac 12}$ denote the resulting pre-smoothed iterate.

\medskip

\noindent\textit{2.~Truncated linear correction}\\
We then set up the truncated defect problem \eqref{eq:convex_defect_problem}. In transformed coordinates it reads
\begin{equation*}
  v^\nu \colonequals \argmin_{\tcorr\in\Rdn}\,\frac 12\tcorr^TQ^\nu \widetilde{H}_T Q^\nu\tcorr - (r_T^\nu Q^\nu)^T \tcorr,
\end{equation*}
where
\begin{equation*}
  r^\nu_T\colonequals f_T-\widetilde{H}_T\tvu^{\nu +\frac 12},
\end{equation*}
and $Q^\nu \colonequals Q(\tvu^{\nu+\frac 12})$ is the truncation matrix \eqref{eq:truncation_matrix}.
Next, one transforms the defect problem back into Euclidean coordinates
\begin{equation}
  \min_{\corr\in\Rdn}\,\frac 12\corr^T \widetilde{H}^\nu\corr - (r^\nu)^T \corr,
  \label{eq:truncated_defect_problem}
\end{equation}
with
\begin{equation*}
  r^\nu\colonequals r^\nu_TQ^\nu\T^{-1},\quad \widetilde{H}^\nu\colonequals\T^{-T}Q^\nu\widetilde{H}_T^\nu Q^\nu\T^{-1}.
\end{equation*}
to avoid multigrid prolongation operators in transformed coordinates.
To handle the possible unboundedness of the defect problem \eqref{eq:truncated_defect_problem} we additionally prescribe a set of finite bound constraints
\begin{equation}
  \label{eq:weighted_constraints}
  a_i  \leq \corr_i \leq b_i \quad i=1,\ldots,dn,
\end{equation}
which lead to a minimisation problem on a compact set.

The constraints are constructed such that a correction $v^\nu$ in untransformed
coordinates that complies with~\eqref{eq:weighted_constraints} will not violate
the trust-region constraints of~\eqref{eq:tr_subproblem} when converted to
transformed coordinates.
\begin{lemma}
Let $w_i$ denote the number of non-zero entries in the $i$-th row of the sparse transformation matrix $T^{-1}$ and let
\begin{equation*}
  R_j\colonequals\bigl\{ 1 \le i \le dn \; : \; T^{-1}_{ij}\neq 0\bigr\}
\end{equation*}
for all $l = 1,\dots, dn$.
Let
\begin{equation}
    a_j \colonequals\max_{i\in R_j}\biggl\{\frac{-\operatorname{sign}(\T^{-1}_{ij})\Deltak-\tvu_i^{\nu+\frac 12}}{w_i\T^{-1}_{ij}}\biggr\},\quad
    b_j \colonequals\min_{i\in R_j}\biggl\{\frac{\operatorname{sign}(\T^{-1}_{ij})\Deltak-\tvu_i^{\nu+\frac 12}}{w_i\T^{-1}_{ij}}\biggr\}.
  \label{eq:max_weights}
\end{equation}
Then if $\corr\in\Rdn$ is such that \eqref{eq:weighted_constraints} holds, we get
\begin{equation}
\label{eq:shifted_tr_constraint}
  \lVert\tvu^{\nu+\frac 12}+\tcorr\rVert_{\infty}\leq\Deltak.
\end{equation}
\end{lemma}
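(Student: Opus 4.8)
The plan is to prove the bound \eqref{eq:shifted_tr_constraint} one coordinate at a time. Writing the correction in transformed coordinates as $\tcorr = \T^{-1}\corr$, its $i$-th entry is $\tcorr_i = \sum_{j}\T^{-1}_{ij}\corr_j$, a sum with exactly $w_i$ nonzero terms, namely those $j$ with $\T^{-1}_{ij}\neq 0$. I would fix such a row index $i\in\cbrackets{1,\dots,dn}$ and aim to show $\abs{\tvu_i^{\nu+\frac 12}+\tcorr_i}\leq\Deltak$; since $i$ is arbitrary, that is the claim. The structural observation that makes it work is that for every $j$ with $\T^{-1}_{ij}\neq 0$ one has $i\in R_j$, so this particular $i$ is among the indices over which the maximum defining $a_j$ and the minimum defining $b_j$ in \eqref{eq:max_weights} are taken; consequently $a_j$ is bounded below, and $b_j$ is bounded above, by the value of the respective quotient at this $i$.

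Next I would convert the hypothesis $a_j\leq\corr_j\leq b_j$ into a two-sided bound on the single scalar $\T^{-1}_{ij}\corr_j$ by a short sign distinction. Multiplying the chain $a_j\leq\corr_j\leq b_j$ through by $w_i\T^{-1}_{ij}$ and then dividing by $w_i>0$ keeps the inequality orientation if $\T^{-1}_{ij}>0$, and reverses it (while simultaneously swapping which of the two quotients in \eqref{eq:max_weights} is the relevant one) if $\T^{-1}_{ij}<0$; the $\operatorname{sign}(\T^{-1}_{ij})$ factor in the numerators is exactly what is needed so that in \emph{both} cases the result collapses to the same symmetric bound
\[
  \frac{-\Deltak-\tvu_i^{\nu+\frac 12}}{w_i}\;\leq\;\T^{-1}_{ij}\,\corr_j\;\leq\;\frac{\Deltak-\tvu_i^{\nu+\frac 12}}{w_i},
\]
valid for each of the $w_i$ indices $j$ with $\T^{-1}_{ij}\neq 0$.

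Finally, summing this estimate over those $w_i$ indices $j$ cancels the $1/w_i$ and produces $-\Deltak-\tvu_i^{\nu+\frac 12}\leq\tcorr_i\leq\Deltak-\tvu_i^{\nu+\frac 12}$, i.e.\ $\abs{\tvu_i^{\nu+\frac 12}+\tcorr_i}\leq\Deltak$, which is what we wanted. The weight $w_i$ in the denominators of \eqref{eq:max_weights} is chosen precisely so that this final summation reassembles a single copy of $\Deltak$.

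The step I expect to be most delicate is the sign bookkeeping in the middle paragraph: clearing the denominator $w_i\T^{-1}_{ij}$ of \eqref{eq:max_weights} can mean multiplying an inequality by a negative number, so one must carefully track both the flip of the inequality and the exchange of the roles of $a_j$ and $b_j$, and verify that the $\operatorname{sign}(\T^{-1}_{ij})$ terms indeed make the two cases coincide. Everything else is elementary finite arithmetic; in particular, no convexity of the model and no property of $\tvu^{\nu+\frac 12}$ beyond it being a fixed vector enters the argument.
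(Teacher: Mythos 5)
Your proposal is correct and follows essentially the same route as the paper's proof: expand $\tcorr_i=\sum_j\T^{-1}_{ij}\corr_j$, use that $i\in R_j$ whenever $\T^{-1}_{ij}\neq 0$ to bound each term by $(\pm\Deltak-\tvu^{\nu+\frac 12}_i)/w_i$, and sum the $w_i$ nonzero terms. Your explicit sign case distinction is in fact slightly more careful than the paper's displayed computation, which substitutes the bound without spelling out the flip for negative entries.
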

\begin{proof}
  Insert \eqref{eq:max_weights} into \eqref{eq:weighted_constraints} to obtain
\begin{equation*}
\max_{i\in R_j}\cbrackets{\frac{-\operatorname{sign}(\T^{-1}_{ij})\Delta_k-\tvu^{\nu+\frac 12}}{w_i\T^{-1}_{ij}}} \leq v_j \leq \min_{i\in R_j}\cbrackets{\frac{\operatorname{sign}(\T^{-1}_{ij})\Delta_k-\tvu^{\nu+\frac 12}}{w_i\T^{-1}_{ij}}}.
\end{equation*}
Now consider the $p$-th constraint in decoupling coordinates
\begin{align*}
  \tvu^{\nu+\frac 12}_p + \tcorr_p &= \tvu^{\nu+\frac 12}_p + \sum_{j = 1}^{dn} \T_{pj}^{-1}\corr_j\\
  &\leq \tvu^{\nu+\frac 12}_p +\sum_{\substack{j=1\\\T^{-1}_{pj}\neq 0}}^{dn}\T_{pj}^{-1}\frac{\Delta_k-\tvu^{\nu+\frac 12}_p}{w_p\T_{pj}^{-1}} = \Delta_k,
\end{align*}
which is the upper bound of \eqref{eq:shifted_tr_constraint}.
The lower bound is shown in the same way.
\end{proof}
We construct the defect problem constraints by replacing the feasible weights \eqref{eq:max_weights} by  an averaged version
\begin{equation}
  \begin{aligned}
  &a_j \colonequals\frac 1{\abs{R_j}}\sum_{i\in R_j}\biggl\{\frac{-\operatorname{sign}(\T^{-1}_{ij})\Deltak-\tvu_i^{\nu+\frac 12}}{w_i\T^{-1}_{ij}}\biggr\},\\
  &b_j \colonequals\frac 1{\abs{R_j}}\sum_{i\in R_j}\biggl\{\frac{\operatorname{sign}(\T^{-1}_{ij})\Deltak-\tvu_i^{\nu+\frac 12}}{w_i\T^{-1}_{ij}}\biggr\},
\end{aligned}
  \label{eq:constructed_weights}
\end{equation}
which is less restrictive than \eqref{eq:max_weights} while capturing the scaling of the decoupling transformation $\T$.

For the approximate solution of the defect problem \eqref{eq:truncated_defect_problem}  with constraints \eqref{eq:weighted_constraints} and \eqref{eq:constructed_weights},
a standard monotone multigrid method is applied~\cite{Graeser_Kornhuber:2009}.
Like in the pre-smoothing step, the Gauss--Seidel smoothers of that method have to take into account the possible non-convexity of the local one-dimensional problems.

\medskip

\noindent\textit{3.~Projection}\\
The resulting correction $\corr^\nu$ is transformed back into the coordinates in which the linearised non-penetration constraints decouple
\begin{equation*}
  \tcorr^\nu \colonequals \T^{-1}\corr^\nu.
\end{equation*}
This transformed correction is
then projected onto the defect obstacles of \eqref{eq:tr_subproblem}, i.e., we define $\hat\corr^\nu$ by
\begin{equation*}
  \hat\corr^\nu_p\colonequals
  \begin{cases}
    \ag^{\Deltak}_p-\tvu_p^{\nu+\frac 12} & \text{if}\quad \tcorr_p^\nu>\ag^{\Deltak}_p -\tvu_p^{\nu+\frac 12},\\
    \hat\corr^\nu_p & \text{else}.
  \end{cases}
\end{equation*}
\medskip

\noindent\textit{4.~Line search}\\
The tentative new iterate $\tvu^{\nu+\frac 12} + \hat\corr^\nu$ is feasible, but it may violate the monotonicity of the TNNMG method.
We ensure energy decrease by performing an exact line search in the direction of $\hat\corr^\nu$, as in \eqref{eq:line_search_problem}.
This is a scalar, quadratic, possibly non-convex minimisation problem.
Since it is posed on a closed interval it is guaranteed to have a solution, which can be computed explicitly.

The modified TNNMG algorithm converges globally towards first-order optimal points of the constrained quadratic minimisation problem \eqref{eq:tr_subproblem}.

A proof for the case without truncation is given in~\cite{Youett:2016}.

\begin{theorem}
  The TNNMG method with a monotone multigrid correction described in \Cref{ssec:tnnmmg} either stops at a first-order optimal point of \eqref{eq:tr_subproblem} or the limit of every convergent subsequence is first-order optimal.
\end{theorem}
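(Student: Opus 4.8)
The plan is to show that the modified TNNMG iteration of \Cref{ssec:tnnmmg} fits into the abstract convergence framework for successively-applied, monotone block-correction methods of \cite{Graeser_Kornhuber:2009}, the same framework that underlies the convergence theorem for the convex case cited above and the truncation-free result in \cite{Youett:2016}. The key point is that, although the functional $\imk$ in \eqref{eq:tr_subproblem} may be non-convex, every sub-step of the iteration is well-defined, produces a feasible iterate, and does not increase $\imk$; combined with the strict decrease produced by the non-linear projected Gauss--Seidel smoother away from first-order optimal points, this yields global convergence to stationary points.

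First I would verify \emph{feasibility and well-posedness of each sub-step}. The projected Gauss--Seidel step minimises $\imk$ over a compact interval in each coordinate, so a minimiser exists even when the one-dimensional problem is non-convex (we fixed a selection rule, the larger $\alpha_p$, to make the smoother a well-defined map); its output $\tvu^{\nu+\frac12}$ is feasible for the bound constraints of \eqref{eq:tr_subproblem} by construction and satisfies $\imk(\tvu^{\nu+\frac12})\le\imk(\vuknu)$. For the linear correction, the added bound constraints \eqref{eq:weighted_constraints}--\eqref{eq:constructed_weights} make \eqref{eq:truncated_defect_problem} a quadratic minimisation over a compact box, hence solvable by MMG; the preceding lemma (with the averaged weights) and the truncation matrix $Q^\nu$ guarantee that transforming $\corr^\nu$ back via $\T^{-1}$ keeps the trust-region part of the constraints satisfied, while the explicit projection in Step~3 enforces the linearised contact part. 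The line search in Step~4 is a scalar quadratic problem on a closed interval, so it has a solution and enforces $\imk(\tvu^{\nu+1})\le\imk(\tvu^{\nu+\frac12})$. Chaining these gives the monotonicity
\begin{equation*}
  \imk(\tvu^{\nu+1})\le\imk(\tvu^{\nu+\frac12})\le\imk(\vuknu)
\end{equation*}
for all $\nu$, and since $\imk$ is bounded below on the compact feasible set of \eqref{eq:tr_subproblem}, the sequence $(\imk(\vuknu))_\nu$ converges.

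Next I would argue \emph{convergence to a first-order optimal point}. If the iteration stops, i.e.\ $\tvu^{\nu+1}=\vuknu$, then the projected Gauss--Seidel step produced no change, which — by the standard argument that the coordinate-wise optimality conditions of the non-linear smoother are exactly the coordinate-wise KKT conditions of \eqref{eq:tr_subproblem} — means $\vuknu$ is first-order optimal. Otherwise, take any convergent subsequence $\tvu^{\nu_l}\to\tvu^*$. From the monotone decrease and the telescoping bound, the energy gain of each smoothing step tends to zero along the subsequence; by continuity of the Gauss--Seidel map (using the fixed selection rule) and a compactness argument, the limit $\tvu^*$ must be a fixed point of the smoother, hence satisfies the coordinate-wise optimality conditions, hence is first-order optimal for \eqref{eq:tr_subproblem}. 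The truncation operator $Q^\nu$ and the coordinate transformations $\T,\T^{-1}$ enter only in the (energy-reducing, feasibility-preserving) correction step and therefore do not affect this reasoning; this is exactly where the present proof differs from the truncation-free argument in \cite{Youett:2016}, and where one must check that truncated-out components are precisely the active ones, so that freezing them cannot destroy stationarity of the limit.

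The \textbf{main obstacle} is the non-convexity of $\imk$ in the correction step: one has to make sure that the coarse-grid MMG correction, which is only an approximate solver for the box-constrained defect problem, still returns an admissible correction whose back-transformation and projection keep the iterate feasible and non-increasing in $\imk$ — so that the correction step can never spoil the monotonicity established by the smoother. The averaged weights \eqref{eq:constructed_weights} are less restrictive than the provably safe weights \eqref{eq:max_weights}, so one must either accept the explicit projection in Step~3 as the safeguard that restores feasibility, or show that the averaging still keeps the back-transformed correction admissible; combined with the line search, the projection is what guarantees $\imk(\tvu^{\nu+1})\le\imk(\tvu^{\nu+\frac12})$ unconditionally, closing the argument.
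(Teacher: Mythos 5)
Your overall strategy is the one the paper itself relies on: the paper prints no proof of this theorem but defers to \cite{Youett:2016} for the truncation-free case, and the intended argument is exactly your decomposition --- every sub-step is well defined, feasible and non-increasing for $\imk$; the feasible set of \eqref{eq:tr_subproblem} is compact, so the energies converge; and all stationarity information is carried by the projected Gauss--Seidel smoother alone, so the truncation, the coordinate transformations and the inexact MMG correction only need to be checked for feasibility and monotonicity, which the projection in Step~3 and the line search in Step~4 provide since $\alpha=0$ is always admissible in the latter. Your observation that for separable bound constraints coordinate-wise one-dimensional optimality is equivalent to first-order optimality of the full problem is also correct and is the hinge of the argument.

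The genuine gap is your appeal to ``continuity of the Gauss--Seidel map''. For a non-convex one-dimensional quadratic restricted to a compact interval the global minimiser sits at one of the two endpoints whenever the restriction is concave, and which endpoint wins can switch under arbitrarily small perturbations of the coefficients; the tie-breaking rule (take the larger $\alpha_p$) makes the smoother a well-defined map but not a continuous one, so you cannot pass to the limit in the assertion that $\tvu^*$ is a fixed point of the smoother. The standard repair, and the one needed here, is to argue with the energy decrease rather than with the argmin: the marginal value $\min_{\alpha}\imk(\vw+\alpha e_p)$ over a compact interval depends continuously on $\vw$ and on the interval endpoints, so the decrease produced by each coordinate minimisation is a continuous function of the point at which it is performed even though the minimiser itself is not. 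If the limit $\tvu^*$ of a subsequence were not first-order optimal, some coordinate would admit a decrease $\delta>0$ at $\tvu^*$ and hence a decrease bounded below by $\delta/2$ on a whole neighbourhood; one must then control the intermediate sweep iterates $\vw_1,\dots,\vw_{p-1}$ along the subsequence (they need not converge to $\tvu^*$ a priori, precisely because the argmin map is discontinuous) to conclude that the total sweep decrease stays bounded away from zero, contradicting the convergence of $\bigl(\imk(\vuknu)\bigr)_\nu$. This bookkeeping over the sweep is the actual content of the proof in \cite{Youett:2016} and is what your outline glosses over; the rest of your proposal, including the treatment of truncation and of the averaged weights via the explicit projection, is in order.
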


\section{Numerical Examples}
\label{sec:numerics}
In this section we illustrate the robustness and global convergence of the filter--trust-region method.
The numerical simulations were done using the \textsc{Dune} framework~\cite{duneII:2008,Blatt:2016}.
A detailed description of the implementation of the linearised non-penetration constraint \eqref{eq:linearised_constraints} can be found in \cite{Popp:2011, Puso_Laursen:2004}.

\subsection{Ironing}
\label{ssec:die_example}

The ironing problem is often used to test the robustness of the mortar discretisation and the applied algebraic solver~\cite{Puso_Laursen:2004}.
In this example a rectangular block is placed under a half-pipe (\Cref{fig:init_ironing}).

\begin{figure}
  \centering
  \def\svgwidth{0.35\textwidth}
  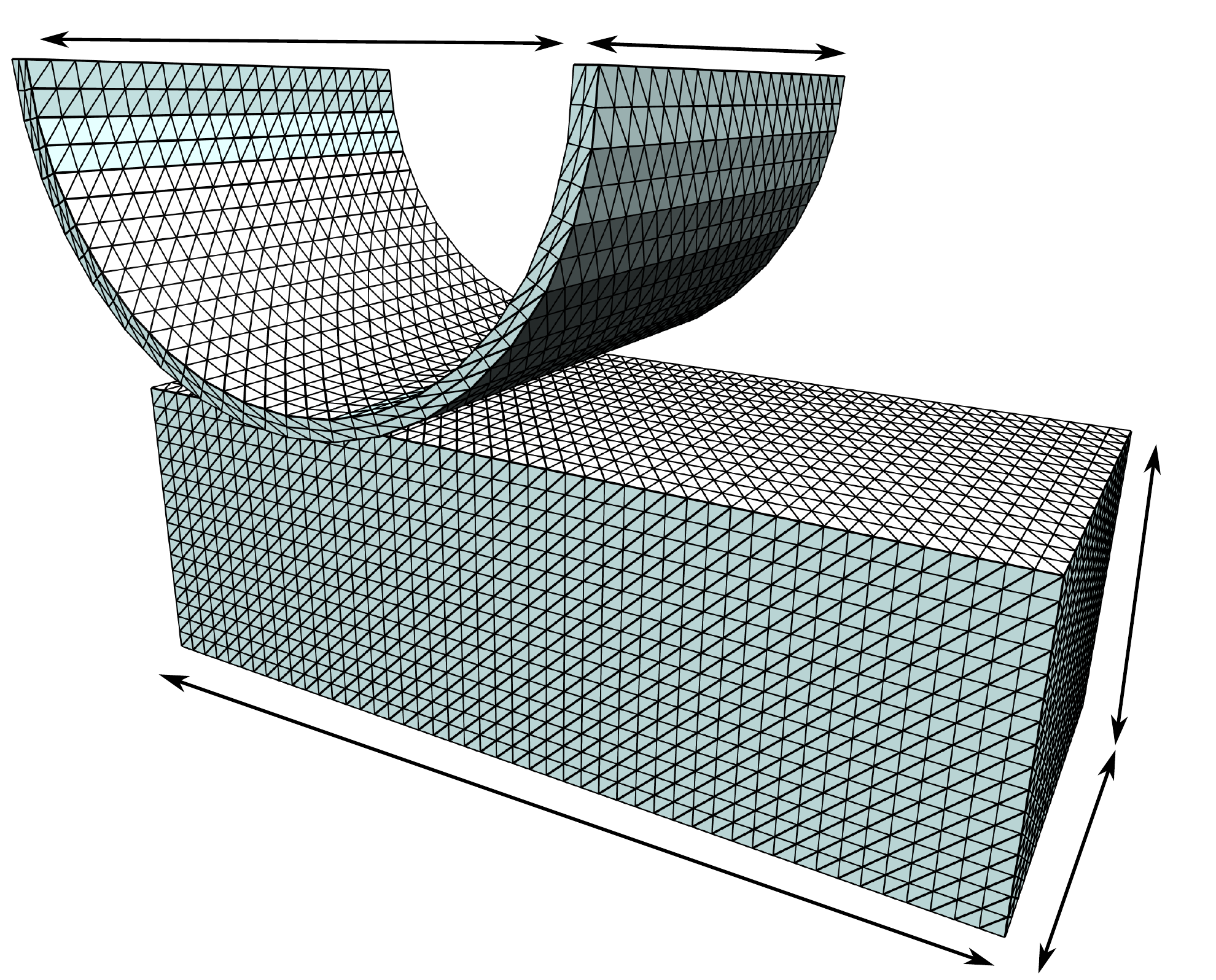
  \caption{The initial configuration of the refined grids}
  \label{fig:init_ironing}
\end{figure}
The block is fixed at the bottom with homogeneous Dirichlet conditions. For the half-pipe, non-homogeneous Dirichlet conditions are prescribed on the top boundary:
First, the half-pipe is pressed vertically into the block with a prescribed total displacement of $1.4$ units (Phase~1).
Then, in a second phase, it is swiped over the block horizontally for $2.1$ units, see \Cref{fig:ironing_evolution}.
\begin{figure}[ht]
  \centering
  \includegraphics[height=3.5cm]{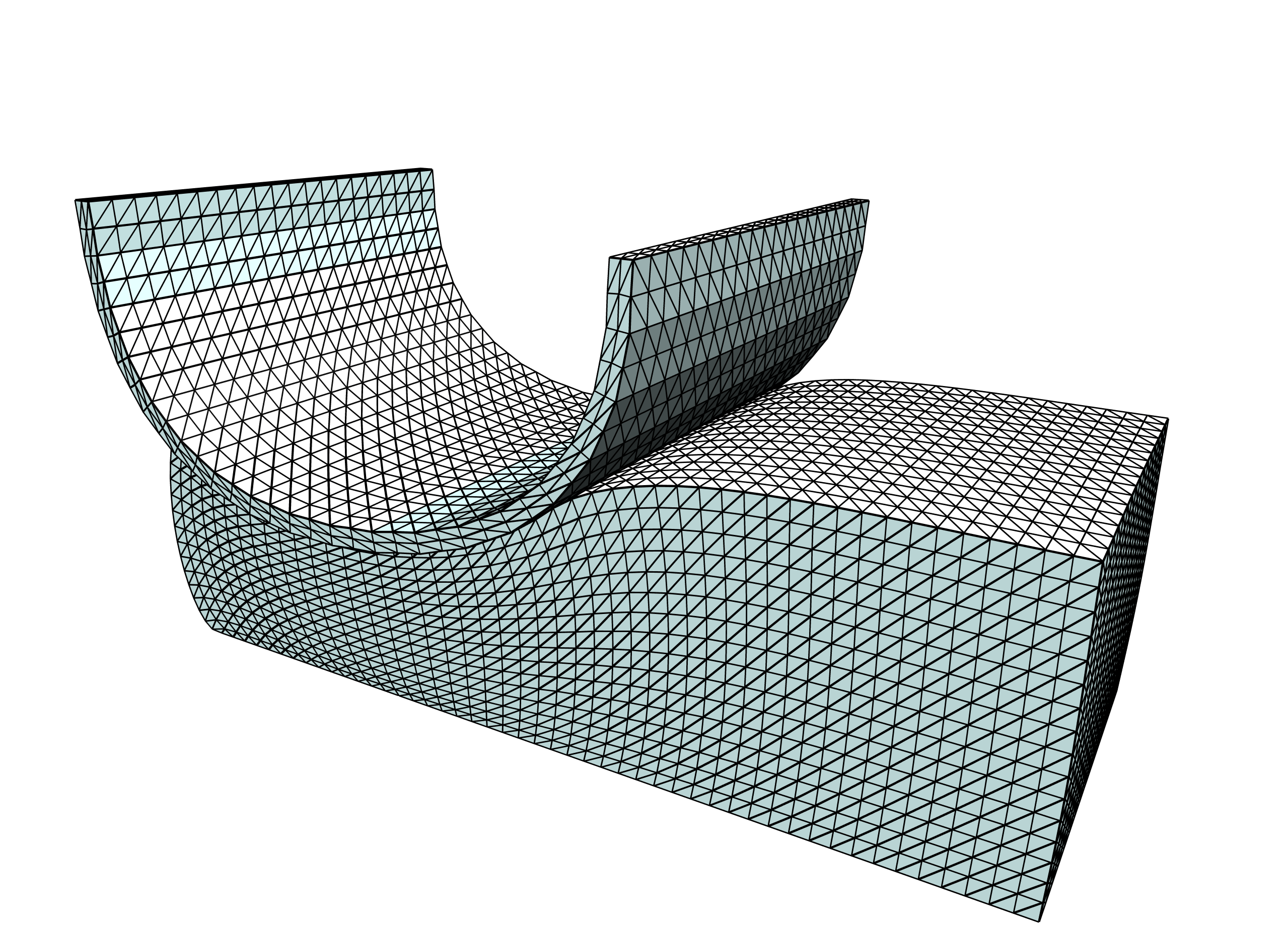}
  \includegraphics[height=3.5cm]{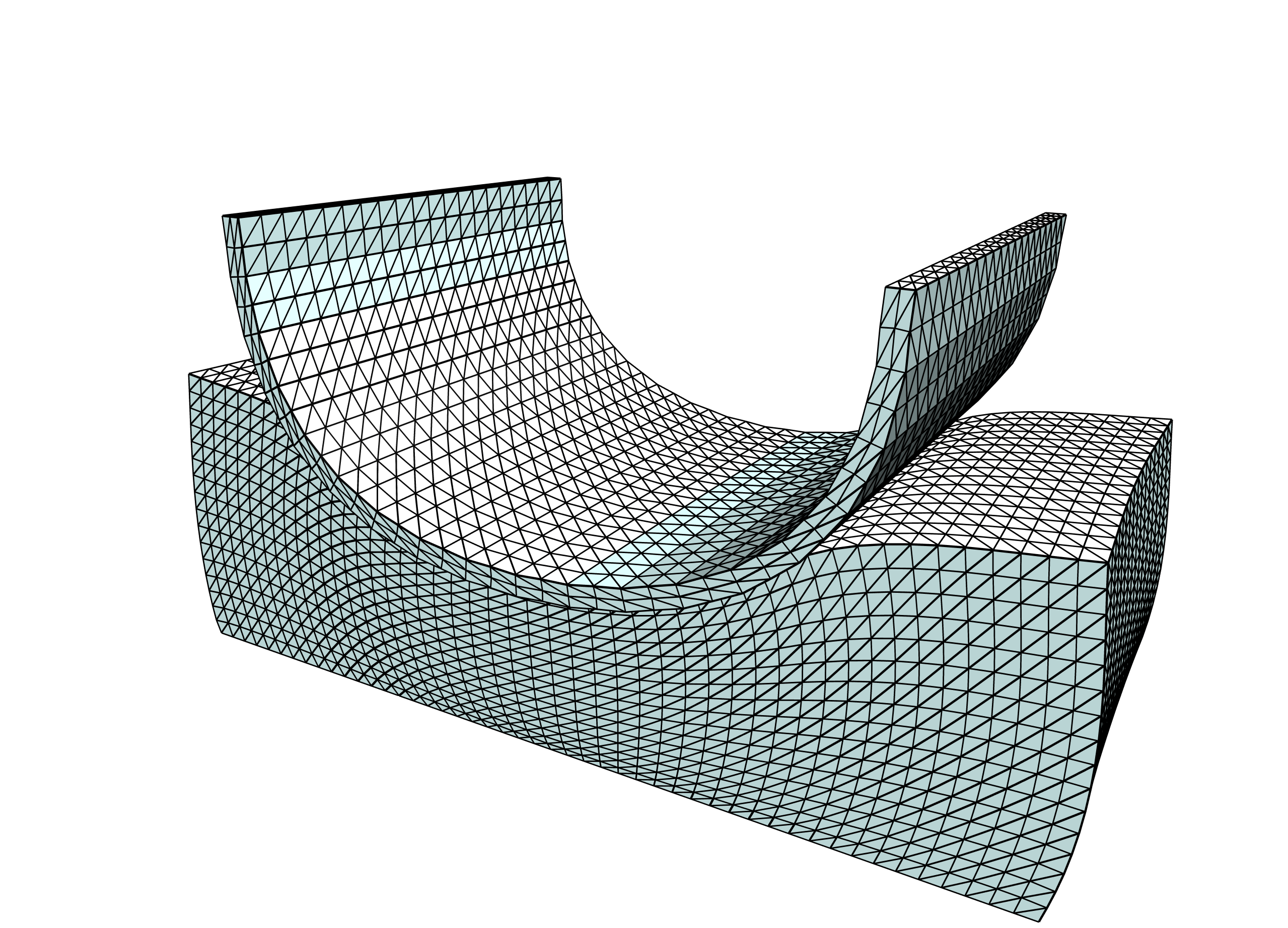}
  \caption{Left: Deformed grids after the vertical displacement. Right: Deformed grids after the horizontal displacement}
  \label{fig:ironing_evolution}
\end{figure}

This benchmark problem is usually solved in small loading steps to stabilise the widely used active-set and penalty methods,
which only converge locally~\cite{Hartmann_Ramm:2008,Hesch_Betsch:2009,Popp:2011,Puso_Laursen:2004}.
To depict the superior robustness of the proposed method, we solve it in only two steps, one for each of the two phases.
In both cases we choose the block to be the non-mortar body.
The bodies are modelled by the non-linear homogeneous Neo-Hookean material law
\begin{equation*}
  \mathcal{W}(\nabla \vvarphi) = \frac \lambda 4\bigl(\deter(\nabla\vvarphi)^2-1\bigr) -\Bigl(\frac \lambda 2 +\mu\Bigr)\log\bigl(\deter(\nabla\vvarphi)\bigr) + \mu\operatorname{tr} E(\nabla \vvarphi),
  \label{eq:neohookean_material}
\end{equation*}
where $E(\nabla \vvarphi) \colonequals \frac 12 (\nabla\vvarphi^T\nabla\vvarphi -\Id)$ denotes the Green--Lagrange strain tensor, and we choose the Lam\'e parameters as
\begin{alignat*}{3}
  \lambda_{\text{pipe}} & = 450,&\qquad\mu_{\text{pipe}} & =225,\\
  \lambda_{\text{block}} & = \frac 34,&\mu_{\text{block}} & =\frac 38.
\end{alignat*}
We use tetrahedral grids with $42\,483$ and $6\,993$ degrees of freedom, respectively,
obtained by four and one step, respectively, of uniform refinement, of corresponding
coarser grids.

The two problems are solved by the filter--trust-region method until
the $H^1$-norm of the relative correction is less than $10^{-7}$.
For the solution of the sub-problems \eqref{eq:tr_subproblem}, we apply the extended TNNMG method from \Cref{ssec:tnnmmg} until the $H^1$-norm of the relative correction falls below a tolerance of $10^{-4}$,
and we use the \textsc{IpOpt} interior point algorithm~\cite{Ipopt} to solve the problem on the coarsest grid level.

In the filter--trust-region method we used the following constants suggested in \cite{Conn_Gould_Toint:2000}: To measure the approximation quality we set $\eta_1 = 0.1$ and $\eta_2 = 0.9$, and in the $\vartheta$-type criterion \eqref{eq:theta_step} we use $\kteta=10^{-4}$.
When the trust region radius needs to be decreased we use
\begin{equation*}
  \Delta^{k+1} = 0.25\min\cbrackets{\norm{\tvuk}_\infty,\,\Deltak},
\end{equation*}
and we skip increasing it during $\aJ$-type iterations $\Delta^{k+1} = \Delta^k$.
As initial trust-region we chose $\Delta^0 = 0.5$ for both phases.
To monitor the convergence of the method towards first-order optimal points, we consider the optimality measure
\begin{equation}
  \chi(\varphik) \colonequals  \biggl|\min_{\substack{\bar{d}^1_{C,0} \leq \ag(\varphik)\\
                        \norm{\bar d}_\infty\leq 1}} \scp{\nabla\imk(0),\bar d}\biggr|,
  \label{eq:optimality_measure}
\end{equation}
which vanishes for first-order optimal points of $\aJ$ when additionally $\vartheta(\varphik)\to 0$
(see \cite[Th.\,12.1.6 \& Thm.\,15.5.13]{Conn_Gould_Toint:2000}).
Its evaluation involves a linear minimisation problem with bound constraints, which can be solved easily.
\begin{figure}[h]
  \centering
  \input{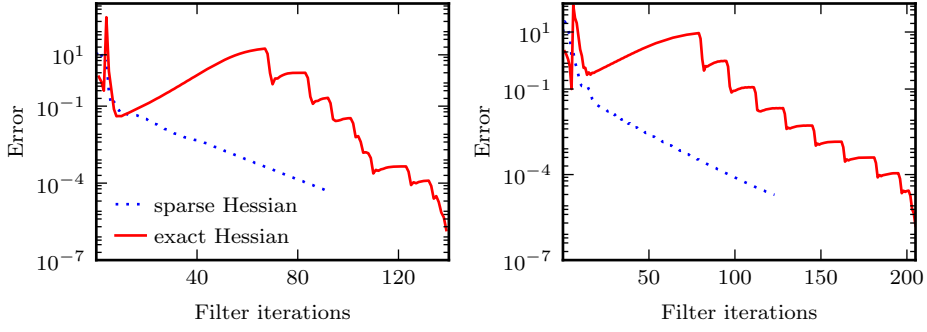}
  \caption{
    Convergence of the optimality measure $\chi$ for the filter--trust-region method with inexact sparse Hessians
    compared to the filter method with exact Hessians.
    Left: Vertical phase. Right: Horizontal phase.}
  \label{fig:ironing_filter_convergence}
\end{figure}

\Cref{fig:ironing_filter_convergence} shows a comparison of convergence of the filter--trust-region method with sparse inexact Hessians \eqref{eq:isqp_subproblem} and the
respective method using exact Hessians \eqref{eq:sqp_transformed_subproblem}.
For the latter we constructed the decoupling coordinate transformation \eqref{eq:exact_mortar_transformation} by computing a LU-decomposition of $\nmMatN$ using UMFPack,
which leads to dense blocks in the exactly transformed stiffness matrices $\HkT$.
For both problems the total iteration numbers are comparable, but due to the sparsity of the inexact Hessian $\iHkT$, the total wall time required by the inexact version is over $80\%$ smaller than for the filter method with exact Hessians, see \Cref{tab:ironing_comp}.
For the exact (sparse) Hessian, 19 (14) iterates were rejected by the filter, and 1 (15) because of insufficient
model approximation~\eqref{eq:model_approximation_quality}.
In total 10 steps in each phase had to be recomputed for the exact Hessian method and 13 respectively 16 steps were repeated in the inexact Hessian case.
The feasibility restoration phase of the filter method never occurred.

\begin{table}[ht]
\begin{center}
\begin{tabular}{l|c|c}
   & Averaged wall time $\HkT$ & Averaged wall time $\iHkT$\\
	\hline
  Assembly of \eqref{eq:tr_subproblem} & 41.31s & 9.8s\\
	\hline
  TNNMG solution  & 73.6s & 15.5s \\
	\hline
	\hline
  Total time to solution & 17\,227s & 2\,917s
\end{tabular}
\end{center}
\caption{Averaged CPU wall times for the exact and inexact filter for the vertical phase.}
\label{tab:ironing_comp}
\end{table}

In \Cref{fig:tr_plot} the trust-region radius $\Deltak$ and the infeasibility of both variants are shown during the vertical phase.
Once the approximation quality of the sub-problems becomes too bad, the step is rejected and the trust-region is decreased to achieve a better approximation of the non-linear energy $\aJ$.
Surprisingly, in the case of the inexact Hessians, the growing instability is detected much earlier than in the case of exact Hessians, leading to a faster convergence in this test problem.
\begin{figure}[h]
  \centering
  \input{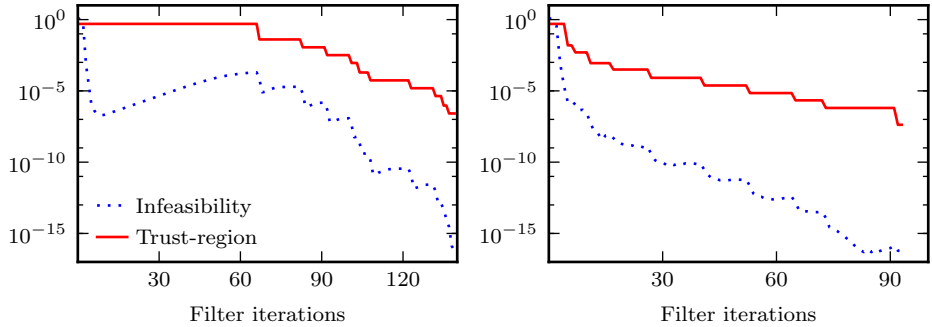}
  \caption{
  The trust-region and infeasibility of the filter method.
  Left: Vertical phase using the exact dense Hessians. Right: Vertical phase using the sparse approximation.}
  \label{fig:tr_plot}
\end{figure}
\begin{figure}[ht]
  \centering
  \input{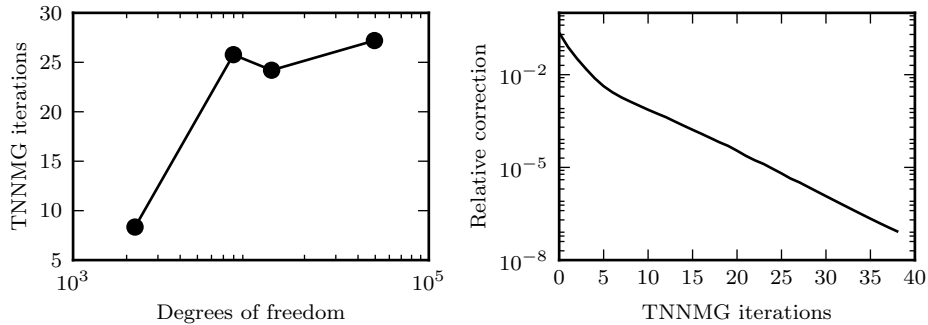}
  \caption{
    Left: TNNMG iterations averaged over the filter--trust-region iteration for an increasing number of degrees of freedom. Right: Convergence of the TNNMG method for a single problem \eqref{eq:tr_subproblem}}
  \label{fig:mg_it_plot}
\end{figure}

In the left of \Cref{fig:mg_it_plot} the average number of TNNMG iterations needed to solve the local problems \eqref{eq:tr_subproblem}, is shown for different numbers of refinement steps.
The iteration numbers appear to be bounded, which indicates the mesh independent convergence often observed for multigrid methods~\cite{Graeser_Kornhuber:2009}.
In the right of \Cref{fig:mg_it_plot} the fast convergence of the TNNMG method applied to a
single \eqref{eq:tr_subproblem} is plotted for an error tolerance of $10^{-7}$.

\section{Conclusion}

In this paper we presented a globally convergent solver for large deformation contact problems.
The solver is using a decoupling of the linearised contact constraints that allows to apply a fast and efficient multigrid method for the solution of the quadratic constrained sub-problems.
The method stands out due to its superior robustness over locally convergent methods, enabling to solve problems without applying incremental loading steps.
To improve the convergence speed, second-order consistent SQP models could be used to achieve locally super-linear convergence \cite{Ulbrich:2004}, which is part of future work.
%
\bibliographystyle{abbrvnat}
\bibliography{youett-sander-kornhuber-filter-contact-2017}
\end{document}